\documentclass[11pt]{article}

\usepackage[T1]{fontenc}
\usepackage{amsmath,amssymb,amsthm}
\usepackage{graphicx}
\usepackage[colorlinks=true]{hyperref}

\theoremstyle{plain}
\newtheorem{theorem}{Theorem}[section]
\newtheorem{lemma}[theorem]{Lemma}
\newtheorem{corollary}[theorem]{Corollary}

\theoremstyle{remark}
\newtheorem{remark}[theorem]{Remark}

\DeclareMathOperator{\per}{per}
\DeclareMathOperator{\tr}{tr}

\newcommand{\ii}{\mathrm{i}}
\newcommand{\E}{\mathcal{E}}                 
\newcommand{\Eper}{\mathcal{E}_{\mathrm{per}}} 

\begin{document}

\title{Permanental Energy of Graphs}

\author{Priyanshu Pant\\
Indian Institute of Technology Indore\\
\texttt{priyanshupant03@gmail.com}
\and
Ranveer Singh\\
Indian Institute of Technology Indore\\
\texttt{ranveer@iiti.ac.in}}
\date{February 27, 2026}

\maketitle
\begin{abstract}
For a simple graph $G$ with adjacency matrix $A(G)$, let $\pi(G,x):=\per(xI-A(G))$ be its permanental
polynomial with roots $\mu_1,\dots,\mu_n\in\mathbb{C}$, and define the \emph{permanental energy}
$\Eper(G):=\sum_{i=1}^n|\mu_i|$.
We prove a sharp universal lower bound: for every $m$-edge graph $G$,
$\Eper(G)\ge 2\sqrt m$, with equality if and only if $G$ is a star together with isolated vertices.
We also prove the general upper bound $\Eper(G)\le n\rho(G)$, where $\rho(G)$ is the spectral radius,
and we study $\Eper(G)$ on several graph families.
\end{abstract}

\noindent\textbf{Keywords.} permanental polynomial, graph energy, extremal graph theory, permanent

\section{Introduction}
\label{sec:intro}
Let $G$ be a finite simple graph on $n$ vertices, with adjacency matrix $A(G)$ and eigenvalues $\lambda_1,\dots,\lambda_n$.
The \emph{energy} of $G$ is defined by
\[
\E(G)=\sum_{i=1}^n |\lambda_i|.
\]
Introduced by Gutman~\cite{Gutman1978}, $\E(G)$ has become a central invariant in spectral graph
theory and mathematical chemistry.
Its original motivation comes from the H\"uckel molecular orbital (HMO) model, where eigenvalues
of molecular graphs are used to approximate the total $\pi$-electron energy of conjugated
hydrocarbons; see~\cite{LiShiGutman2012}.
More generally, spectral quantities such as $\E(G)$ provide concise descriptors of graph structure;
see~\cite{CvetkovicDoobSachs1980} for background on graph spectra.

A bridge from linear algebra to graph invariants is via the \emph{characteristic polynomial}
\[
\chi(G,x)=\det(xI-A(G)),
\]
whose roots are exactly the eigenvalues $\lambda_1,\dots,\lambda_n$.
Since the determinant can be computed in $O(n^{\omega})$ arithmetic operations, where $\omega$ is the matrix-multiplication exponent~\cite{BunchHopcroft1974}, this bridge is useful not only conceptually but also computationally.
In particular, invariants such as $\E(G)$ have a rich theory and admit efficient computation.

The permanent looks deceptively similar to the determinant.
For an $n\times n$ matrix $M=(m_{ij})$,
\[
\det(M)=\sum_{\sigma\in S_n}\operatorname{sgn}(\sigma)\prod_{i=1}^n m_{i,\sigma(i)},
\qquad
\per(M)=\sum_{\sigma\in S_n}\prod_{i=1}^n m_{i,\sigma(i)},
\]
where $S_n$ is the symmetric group on $[n]:=\{1,2,\dots,n\}$ and $\operatorname{sgn}(\sigma)\in\{+1,-1\}$ denotes the sign of the permutation $\sigma$.
The only difference is the sign factor, yet the consequences are dramatic.
Valiant proved that computing $\per(M)$ is $\#P$-complete, even for $0$--$1$ matrices~\cite{valiant1979complexity},
making the permanent a central object in theoretical computer science.

The P\'olya Permanent Problem~\cite{polya1913aufgabe} asks when the permanent of a matrix can be transformed
into the determinant of a related matrix by suitable sign changes.
McCuaig~\cite{mccuaig2004polya} showed that this problem is equivalent to several combinatorial and
graph-theoretic problems, including counting perfect matchings in a graph.
This motivates studying which structural and spectral properties are shared by the permanent and determinant, despite the hardness of the permanent.

Analogous to the characteristic polynomial, the \emph{permanental polynomial} of a graph $G$ is
\[
\pi(G,x):=\per(xI-A(G)).
\]
Introduced by Turner~\cite{turner1968generalized}, $\pi(G,x)$ has been studied as a permanent-based
spectral invariant in graph theory and chemical graph theory; see~\cite{kasum1981chemical,merris1981permanental}.
In particular, its coefficients and zeros have been used as structural descriptors and computed
for several chemically motivated families.
Because computing permanents is hard in general, much of the literature focuses on structural
properties and efficient computation for special graph families; see~\cite{GutmanCash2002,HuoLiangBai2006,LiangTongBai2008}.
Computational studies suggest that \emph{copermanental} graphs (nonisomorphic graphs with the same
permanental polynomial) are rare, especially compared with cospectrality for the characteristic
polynomial, so $\pi(G,x)$ may be a strong isomorphism heuristic~\cite{LiuRen2014}.
Related work develops the theory of permanental spectra~\cite{LiuZhang2013properties,pant2025,WuLai2018pernullity}.

\noindent\textbf{Permanental energy.}
Let $\mu_1,\dots,\mu_n$ be the roots of $\pi(G,x)$ (in general $\mu_i\in\mathbb{C}$), and write $|\mu_i|$ for their modulus.
We define the \emph{permanental energy} of $G$ as
\[
\Eper(G)=\sum_{i=1}^n|\mu_i|.
\]
Thus $\Eper(G)$ is a single number that summarizes the permanental roots, in the same spirit
as the usual graph energy.
However, permanental roots are often complex, so many standard eigenvalue techniques do not apply.
Notably, Dehmer \emph{et al.}~\cite{Dehmer2017INS} studied graph descriptors built from the moduli $|\mu_i|$ like
$\sum_i|\mu_i|$ (that is, $\Eper(G)$), $\sum_i|\mu_i|^2$, and more generally $\sum_i|\mu_i|^p$ for various $p$,
and reported that these quantities empirically distinguish networks well in practice.
However, to the best of our knowledge, little general theory is known for such modulus-based permanental descriptors.

Since computing the permanent is $\#P$-complete, exactly computing $\pi(G,x)$ and its roots is infeasible in general; consequently, one should not expect
efficient exact computation of $\Eper(G)$ for arbitrary graphs.
This makes general bounds and extremal characterizations especially valuable, because they provide information about $\Eper(G)$ without requiring explicit root computation.

Despite the hardness of computing $\pi(G,x)$ exactly, the quantity $\Eper(G)$ admits sharp universal extremal behavior. We prove that among all graphs with $m$ edges, the minimum possible permanental energy is exactly $2\sqrt m$, attained only by a star plus
isolated vertices. We prove the general upper bound $\Eper(G)\le n\rho(G)$ and derive basic corollaries (Theorem~\ref{thm:upper-rho} and Corollaries~\ref{cor:upper-m}--\ref{cor:upper-n}). We also give sharper information for several graph families, in Section~\ref{sec:families}.

The remainder of the paper is structured as follows.
Section~\ref{sec:prelim} introduces the necessary definitions, notations, and preliminary results.
Section~\ref{sec:bounds} proves upper and lower bounds on $\Eper(G)$ and characterizes the extremal graphs.
Section~\ref{sec:families} studies $\Eper(G)$ for several graph families.
Section~\ref{sec:open:problems} lists future directions, and the appendix contains proofs deferred from the main text.

\section{Preliminaries}\label{sec:prelim}

For an $n\times n$ matrix $A$ and index sets $I,J\subseteq [n]$, let $A[I,J]$ denote the submatrix
of $A$ formed by rows indexed by $I$ and columns indexed by $J$. In particular, for $S\subseteq [n]$,
we write $A[S,S]$ for the principal submatrix indexed by $S$.

Throughout this paper, graphs are finite, undirected, and simple. 
Let $G$ be a graph with $V(G)=[n]$ and adjacency matrix $A(G)$, and for $S\subseteq[n]$ write
$A(G)[S]:=A(G)[S,S]$. Let $\Delta(G)$ denote the maximum degree of a vertex of $G$.
We write $\sqcup$ for disjoint union of graphs.
Standard graphs are denoted by $K_n$ (complete graph), $K_{1,m}$ (star), $K_1$ (isolated vertex),
$P_n$ (path), and $C_n$ (cycle).
Write $[n]:=\{1,2,\dots,n\}$. We write $\ii$ for the imaginary unit, so $\ii^2=-1$.

We collect below several basic lemmas on permanental polynomials and their roots that will be used throughout the paper.
First recall a standard principal-submatrix expansion for $\per(xI-A)$; see~\cite{minc1984permanents}.

\begin{lemma}\label{lem:principal-expansion}
Let $A\in\mathbb{R}^{n\times n}$ and define its permanental polynomial
\[
\pi(A,x):=\per(xI-A)=\sum_{i=0}^n c_i\,x^{\,n-i}.
\]
Then $c_0=1$, and for each $1\le i\le n$,
\[
c_i = (-1)^i \sum_{\substack{S\subseteq [n]\\ |S|=i}} \per\!\bigl(A[S,S]\bigr).
\]
\end{lemma}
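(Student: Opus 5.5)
The plan is to expand $\per(xI-A)$ directly from the definition of the permanent and group the terms according to which factors contribute $x$. Writing $M=xI-A$, each entry has the form $m_{jk}=x\delta_{jk}-a_{jk}$. Then
\[
\per(M)=\sum_{\sigma\in S_n}\prod_{j=1}^n\bigl(x\delta_{j\sigma(j)}-a_{j\sigma(j)}\bigr).
\]
I will expand each product by choosing, for every row $j$, either the term $x\delta_{j\sigma(j)}$ or the term $-a_{j\sigma(j)}$. Let $T\subseteq[n]$ be the set of rows where we choose $x\delta$, and put $S=[n]\setminus T$. A choice contributes only if $\sigma(j)=j$ for all $j\in T$. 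In that case its value is $x^{|T|}(-1)^{|S|}\prod_{j\in S}a_{j\sigma(j)}$.

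Next I will interchange the sums, summing first over $S$ and then over $\sigma$. For a fixed $S$ with $|S|=i$, the contributing permutations are exactly those fixing every point of $[n]\setminus S$. Since such a $\sigma$ fixes the complement pointwise and is a bijection, it maps $S$ onto $S$. So these permutations are in bijection with permutations $\tau$ of $S$.

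Note that $\tau$ may itself have fixed points in $S$. This causes no double counting. The pair $(T,\sigma)$ indexes a term of the expanded product uniquely: the choice of $x\delta$ versus $-a$ is recorded by $T$ and not inferred from $\sigma$. Hence the fixed-$S$ contribution is
\[
x^{n-i}(-1)^i\sum_{\tau}\prod_{j\in S}a_{j\tau(j)}=x^{n-i}(-1)^i\per(A[S,S]).
\]

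Summing over all $S$ of size $i$ gives the claimed formula for $c_i$. For $i=0$ we have $S=\emptyset$, whose only permutation is the identity, with empty product $1$. This gives $c_0=1$; equivalently, one can use the convention $\per$ of an empty matrix $=1$.

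The only step needing care is the bookkeeping in the previous paragraph: a permutation with fixed points in $S$ must be counted through the $-a_{jj}$ choice without being conflated with the $x$ choice. Indexing the expansion by the pair $(T,\sigma)$ settles this, so I expect no genuine obstacle. This is the permanent analogue of the standard principal-minor expansion of the characteristic polynomial, with all signs $\operatorname{sgn}(\tau)$ replaced by $1$.
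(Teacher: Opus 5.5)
Your proof is correct. Indexing the expanded terms by the pair $(T,\sigma)$, noting that $\sigma$ must fix $T$ pointwise and hence permute $S$, gives $c_i=(-1)^i\sum_{|S|=i}\per(A[S,S])$, and $c_0=1$ comes from the empty product.

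The paper gives no proof of its own and only cites Minc's book for this standard fact. Your direct expansion is the usual argument behind that citation. Equivalently, it is the special case $B=-A$, $C=xI$ of the expansion $\per(B+C)=\sum_{P,Q}\per(B[P,Q])\,\per(C[P^c,Q^c])$ over $P,Q\subseteq[n]$ with $|P|=|Q|$, where only the terms with $P=Q$ survive.
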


Applying Lemma~\ref{lem:principal-expansion} to $A(G)$ yields the first coefficients of the
permanental polynomial $\pi(G,x)=\per(xI-A(G))$.

\begin{lemma}\label{lem:c1c2c3}
Let $G$ be a simple graph on $n$ vertices with $m$ edges and $t$ triangles. Write
\[
\pi(G,x)=\sum_{k=0}^n c_k\,x^{\,n-k}.
\]
Then $c_0=1$, $c_1=0$, $c_2=m$, and $c_3=-2t$.
\end{lemma}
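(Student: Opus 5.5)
We will read off the first few coefficients directly from Lemma~\ref{lem:principal-expansion}, applied to $A=A(G)$. That lemma gives $c_0=1$ and $c_i=(-1)^i\sum_{|S|=i}\per(A(G)[S])$. So the whole task is to compute $\per(A(G)[S])$ for every $S\subseteq[n]$ with $|S|\in\{1,2,3\}$ and then sum over $S$.

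For $|S|=1$: the principal submatrix is the single diagonal entry $a_{vv}=0$, because $G$ has no loops. Hence $c_1=0$.

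For $|S|=2$ with $S=\{u,v\}$: the submatrix is $\begin{pmatrix}0&a_{uv}\\ a_{uv}&0\end{pmatrix}$. Its permanent is $0\cdot 0+a_{uv}^2=a_{uv}$, which is $1$ exactly when $uv\in E(G)$. Summing over all $2$-subsets counts the edges, and the sign $(-1)^2=1$ gives $c_2=m$.

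For $|S|=3$ with $S=\{u,v,w\}$: the diagonal of $A(G)[S]$ is zero. In $\per(A(G)[S])=\sum_{\sigma\in S_3}\prod_{x\in S} a_{x\sigma(x)}$, only derangements of $S$ can contribute. These are the two $3$-cycles $(u\,v\,w)$ and $(u\,w\,v)$, and each contributes $a_{uv}a_{vw}a_{wu}$, using symmetry of $A(G)$ for the second. This product equals $1$ exactly when $G[S]$ is a triangle and is $0$ otherwise. So $\per(A(G)[S])=2\cdot\mathbf{1}[G[S]\cong K_3]$. Summing over $3$-subsets gives $2t$, and the sign $(-1)^3$ yields $c_3=-2t$.

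There is no genuine obstacle here. The only point needing care is the $|S|=3$ case: we must observe that permutations with a fixed point vanish because the diagonal is zero. Only then do we count the two $3$-cycles, which explains the factor $2$. Note that there is no sign cancellation of the kind that happens for the determinant, where both $3$-cycles are even and likewise give $2$.
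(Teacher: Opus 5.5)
Your proposal is correct and takes essentially the same route as the paper: apply the principal-submatrix expansion and evaluate the permanents of the $1\times 1$, $2\times 2$, and $3\times 3$ principal submatrices, where your explicit count of the two $3$-cycles justifies the factor $2$ that the paper simply asserts. Your closing aside is slightly muddled, since both $3$-cycles are even and so no cancellation occurs for the determinant either (the determinant of a triangle's adjacency matrix is also $2$), but this does not affect the proof.
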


\begin{proof}
By Lemma~\ref{lem:principal-expansion}, we have
\[
c_k = (-1)^k \sum_{\substack{S\subseteq [n]\\ |S|=k}} \per(A(G)[S]) \qquad (0\le k\le n).
\]
For $k=1$, every $1\times 1$ principal submatrix equals $(0)$, hence $c_1=0$.
For $k=2$, $A(G)[S]=\bigl(\begin{smallmatrix}0&1\\1&0\end{smallmatrix}\bigr)$ if and only if $S$ is an edge,
and then $\per(A(G)[S])=1$, otherwise $\per(A(G)[S])=0$; thus $c_2=m$.
For $k=3$, $\per(A(G)[S])=2$ iff $G[S]\cong K_3$ and otherwise $0$, so
$c_3=-\sum_{|S|=3}\per(A(G)[S])=-2t$.
\end{proof}

We will use the following Newton identities to express coefficients of a polynomial in terms of
power sums of its roots.

\begin{lemma}[{\cite{Marden1966}}]\label{lem:newton12}
Let $p(x)$ be a monic polynomial of degree $n$ with roots $\mu_1,\dots,\mu_n$, and write
\[
p(x)=x^n+a_1x^{n-1}+a_2x^{n-2}+\cdots.
\]
Then
\[
\sum_{i=1}^n \mu_i = -a_1,
\qquad
\sum_{i=1}^n \mu_i^2 = a_1^2-2a_2.
\]
\end{lemma}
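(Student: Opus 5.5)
The statement is the two Newton identities, $\sum_i\mu_i=-a_1$ and $\sum_i\mu_i^2=a_1^2-2a_2$. The plan is to read both off the factorization of $p$ over $\mathbb{C}$. Since $p$ is monic of degree $n$, the fundamental theorem of algebra gives
\[
p(x)=\prod_{i=1}^n (x-\mu_i),
\]
with roots counted with multiplicity. This multiplicity convention is the one under which $\Eper(G)$ was defined above.

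Next, expand the product and compare the coefficients of $x^{n-1}$ and $x^{n-2}$ with those of $p(x)=x^n+a_1x^{n-1}+a_2x^{n-2}+\cdots$. A term contributing to $x^{n-1}$ picks $-\mu_i$ from exactly one factor and $x$ from all others. Hence
\[
a_1=-\sum_{i=1}^n\mu_i=-e_1,
\]
which is the first identity. A term contributing to $x^{n-2}$ picks $-\mu_i$ and $-\mu_j$ from two distinct factors $i<j$, so
\[
a_2=\sum_{1\le i<j\le n}\mu_i\mu_j=e_2,
\]
where $e_1$ and $e_2$ are the elementary symmetric polynomials in the roots.

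For the second identity, square the first power sum:
\[
\Bigl(\sum_i\mu_i\Bigr)^2=\sum_i\mu_i^2+2\sum_{i<j}\mu_i\mu_j .
\]
This gives $\sum_i\mu_i^2=e_1^2-2e_2=a_1^2-2a_2$.

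There is no genuine obstacle. The only points needing care are the sign conventions ($a_1=-e_1$ but $a_2=+e_2$) and counting roots with multiplicity; the degenerate cases $n=0$ and $n=1$ follow directly. In the paper's intended use, combining this with Lemma~\ref{lem:c1c2c3} ($a_1=c_1=0$, $a_2=c_2=m$) yields $\sum_i\mu_i=0$ and $\sum_i\mu_i^2=-2m$.
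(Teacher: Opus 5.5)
Your proof is correct: expanding $\prod_{i=1}^n(x-\mu_i)$ gives $a_1=-e_1$ and $a_2=e_2$, and squaring $e_1$ then yields $\sum_i\mu_i^2=a_1^2-2a_2$. The paper does not prove this lemma; it only cites Marden, and your Vieta-plus-squaring argument is the standard derivation behind that citation.
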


\begin{corollary}\label{cor:mu-sums}
Let $G$ be a simple graph with $m$ edges and permanental roots $\mu_1,\dots,\mu_n$. Then
\[
\sum_{i=1}^n \mu_i=0,
\qquad
\sum_{i=1}^n \mu_i^2=-2m.
\]
\end{corollary}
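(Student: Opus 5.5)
This is a direct substitution of the coefficient values from Lemma~\ref{lem:c1c2c3} into the Newton identities of Lemma~\ref{lem:newton12}. The polynomial $\pi(G,x)=\per(xI-A(G))$ is monic of degree $n$; by Lemma~\ref{lem:principal-expansion} it has $c_0=1$. Its roots, counted with multiplicity, are $\mu_1,\dots,\mu_n$. So Lemma~\ref{lem:newton12} applies with $p(x)=\pi(G,x)$, $a_1=c_1$ and $a_2=c_2$.

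The steps are as follows. First, Lemma~\ref{lem:c1c2c3} gives $c_1=0$ and $c_2=m$. Second, the first Newton identity yields $\sum_i\mu_i=-a_1=-c_1=0$. Third, the second identity yields
\[
\sum_{i=1}^n\mu_i^2=a_1^2-2a_2=0-2m=-2m.
\]

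There is no real obstacle. The only point to check is that the identities concern the complex roots themselves, not their moduli, so they hold verbatim even though the $\mu_i$ may be non-real. The content of the corollary is the sign: unlike the ordinary spectrum, where $\sum_i\lambda_i^2=2m$, here the second power sum is negative. This is precisely because $c_2=+m$ for the permanent instead of $-m$ for the determinant.
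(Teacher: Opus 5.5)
Your proposal is correct and matches the paper's proof, which likewise substitutes $c_1=0$ and $c_2=m$ from Lemma~\ref{lem:c1c2c3} into the Newton identities of Lemma~\ref{lem:newton12}. Your added remarks about non-real roots and the sign difference from the determinant case are accurate.
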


\begin{proof}
Combine Lemmas~\ref{lem:newton12} and~\ref{lem:c1c2c3}.
\end{proof}

We will need the following nonstandard inequality (including its equality conditions) in proving the
lower bound for $\Eper(G)$.

\begin{lemma}\label{lem:l1l2}
Let $z_1,\dots,z_n\in\mathbb{C}$ satisfy $\sum_{i=1}^n z_i=0$. Then
\begin{equation}\label{eq:l1l2}
\Bigl(\sum_{i=1}^n |z_i|\Bigr)^2 \ge 2\sum_{i=1}^n |z_i|^2.
\end{equation}
Moreover, equality holds iff either $z_1=\cdots=z_n=0$, or there exist distinct indices $p\neq q$
and $\alpha\neq 0$ such that $z_p=\alpha$, $z_q=-\alpha$, and $z_i=0$ for all $i\notin\{p,q\}$.
\end{lemma}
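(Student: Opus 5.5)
Set $r_i:=|z_i|$ and $S:=\sum_i r_i$, and expand the square:
\[
S^2=\sum_i r_i^2+\sum_{i\ne j} r_ir_j,
\]
so \eqref{eq:l1l2} is equivalent to $\sum_{i\ne j} r_ir_j\ge \sum_i r_i^2$. I will prove this termwise, via the pointwise bound
\[
r_i\le \sum_{j\ne i} r_j \qquad (1\le i\le n).
\]
It follows from the hypothesis: $z_i=-\sum_{j\ne i}z_j$, and the triangle inequality gives $|z_i|\le\sum_{j\ne i}|z_j|$. Multiplying by $r_i\ge0$ and summing over $i$ yields $\sum_i r_i^2\le\sum_i\sum_{j\ne i}r_ir_j$, which is exactly the required inequality. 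So the bound itself is short.

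For the equality case, the ``if'' direction is immediate. If all $z_i=0$, both sides vanish. If the configuration is $\alpha,-\alpha,0,\dots$, then the left side is $(2|\alpha|)^2=4|\alpha|^2$ and the right side is $2\cdot 2|\alpha|^2$.

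For ``only if'', equality in the summed inequality forces $r_i\bigl(\sum_{j\ne i}r_j-r_i\bigr)=0$ for every $i$, since each term is nonnegative. Hence every index with $z_i\ne0$ satisfies $r_i=\sum_{j\ne i}r_j$, i.e.\ $2r_i=S$.

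Suppose not all $z_i$ vanish, and let $k\ge1$ be the number of nonzero $z_i$. Each nonzero one has modulus $S/2$, so summing the moduli gives $S=kS/2$, and since $S>0$ this forces $k=2$. Thus there are exactly two nonzero entries $z_p,z_q$ with $|z_p|=|z_q|$. The condition $\sum_i z_i=0$ then gives $z_q=-z_p$, which is the stated configuration with $\alpha=z_p$. (Note $k=1$ is impossible anyway, since a single nonzero entry contradicts $\sum_i z_i=0$.)

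The main point needing care is this equality analysis: one must pass from equality of the sums to equality in each term, and then run the counting argument $S=kS/2$. No equality condition for the triangle inequality is needed, because the modulus count alone pins down $k=2$.
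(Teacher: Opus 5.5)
Your proof is correct and follows essentially the same route as the paper. Both use the triangle-inequality bound $|z_i|\le\sum_{j\ne i}|z_j|$ (equivalently $|z_i|\le S/2$), multiply by $|z_i|$ and sum; in the equality case, each nonzero modulus is forced to equal $S/2$, so exactly two nonzero entries remain and they cancel.
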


\begin{proof}
Let $s:=\sum_{i=1}^n |z_i|$. If $s=0$, then $|z_i|=0$ for all $i$, hence $z_1=\cdots=z_n=0$.
Assume $s>0$. By $\sum_{i=1}^n z_i=0$ we have $z_i=-\sum_{j\ne i} z_j$, and therefore
\[
|z_i|\le \sum_{j\ne i}|z_j|=s-|z_i|.
\]
Thus $|z_i|\le s/2$ for every $i$. Multiplying by $|z_i|$ gives
$|z_i|^2\le (s/2)|z_i|$, and summing over $i$ yields
\begin{equation}\label{eq:l2-upper}
\sum_{i=1}^n |z_i|^2 \le \frac{s}{2}\sum_{i=1}^n |z_i|=\frac{s^2}{2}.
\end{equation}
Rearranging \eqref{eq:l2-upper} gives \eqref{eq:l1l2}.
For the equality case, suppose equality holds in \eqref{eq:l1l2}, equivalently in \eqref{eq:l2-upper}.
Then
\[
\sum_{i=1}^n\Bigl(\frac{s}{2}|z_i|-|z_i|^2\Bigr)
=\frac{s^2}{2}-\sum_{i=1}^n|z_i|^2=0.
\]
Since each summand is nonnegative, we must have $|z_i|^2=(s/2)|z_i|$ for every $i$, hence
$|z_i|\in\{0,s/2\}$.
Since each $|z_i|$ is either $0$ or $s/2$ and their sum is $s$, there exist distinct indices
$p\neq q$ with $|z_p|=|z_q|=s/2$, while $z_i=0$ for all $i\notin\{p,q\}$.
Since $\sum_{i=1}^n z_i=0$, we have $z_p+z_q=0$, hence $z_q=-z_p$. Writing $z_p=\alpha$ gives
$z_q=-\alpha$ with $\alpha\neq 0$.

Conversely, if $z_p=\alpha$, $z_q=-\alpha$, and all other $z_i=0$, then $\sum_{i=1}^n z_i=0$ and
\[
\Bigl(\sum_{i=1}^n |z_i|\Bigr)^2 = (2|\alpha|)^2 = 4|\alpha|^2
= 2\bigl(|\alpha|^2+|\alpha|^2\bigr)=2\sum_{i=1}^n |z_i|^2,
\]
so equality holds in \eqref{eq:l1l2}.
\end{proof}

\section{Bounds and extremal graphs}\label{sec:bounds}

In this section we establish universal lower and upper bounds on $\Eper(G)$ and
characterize the graphs attaining equality.

\subsection{Lower bound}

\begin{theorem}\label{thm:lower-extremal}
Let $G$ be a simple graph with $m$ edges and permanental roots $\mu_1,\dots,\mu_n$. Then
\[
\Eper(G)=\sum_{i=1}^n|\mu_i|\ \ge\ 2\sqrt{m}.
\]
\end{theorem}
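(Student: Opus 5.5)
The plan is to combine Corollary~\ref{cor:mu-sums} with Lemma~\ref{lem:l1l2}, applied to the permanental roots $z_i=\mu_i$. Corollary~\ref{cor:mu-sums} gives $\sum_i\mu_i=0$, so the hypothesis of Lemma~\ref{lem:l1l2} holds and
\[
\Eper(G)^2=\Bigl(\sum_{i=1}^n|\mu_i|\Bigr)^2\ \ge\ 2\sum_{i=1}^n|\mu_i|^2 .
\]

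It then remains to bound $\sum_i|\mu_i|^2$ from below by $2m$. This uses the second identity of Corollary~\ref{cor:mu-sums}, $\sum_i\mu_i^2=-2m$, together with the triangle inequality:
\[
2m=\Bigl|\sum_{i=1}^n\mu_i^2\Bigr|\le\sum_{i=1}^n|\mu_i^2|=\sum_{i=1}^n|\mu_i|^2 .
\]
Substituting into the previous display gives $\Eper(G)^2\ge 4m$. Since $\Eper(G)\ge0$, taking square roots yields $\Eper(G)\ge 2\sqrt m$. When $m=0$ the bound is trivial: $\pi(G,x)=x^n$, so all roots are zero.

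The only conceptual point is that the roots $\mu_i$ are complex. One therefore cannot use $\sum\mu_i^2=\sum|\mu_i|^2$ as in the ordinary energy case, and must instead pass through the modulus of the power sum. This is the step I would watch most carefully.

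For the equality discussion that presumably follows, I would track where both inequalities become tight. Equality in Lemma~\ref{lem:l1l2} forces the nonzero roots to be exactly $\pm\alpha$. Then $\sum\mu_i^2=2\alpha^2=-2m$ gives $\alpha=\ii\sqrt m$, so $\pi(G,x)=x^{n-2}(x^2+m)$.

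Reading this equality case back into graph structure is the real obstacle. Via Lemma~\ref{lem:c1c2c3}, $c_3=0$ forces $G$ to be triangle-free. One also needs $c_4=0$, which by Lemma~\ref{lem:principal-expansion} counts pairs of disjoint edges plus $4$-cycles with positive weight, so $G$ has no two independent edges. The only triangle-free graphs with all edges pairwise intersecting are stars, giving $K_{1,m}$ plus isolated vertices.
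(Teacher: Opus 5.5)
Your proposal is correct and matches the paper's proof: Lemma~\ref{lem:l1l2} is applied to the roots using $\sum_i\mu_i=0$, and then the triangle inequality gives $\sum_i|\mu_i|^2\ge\bigl|\sum_i\mu_i^2\bigr|=2m$. Your equality sketch also follows the paper's proof of Theorem~\ref{thm:unique-equality}. The one simplification is that $c_4=0$ alone (rather than vanishing of all $c_k$, $k\ge4$) already rules out two disjoint edges.
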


\begin{proof}
By Corollary~\ref{cor:mu-sums}, $\sum_{i=1}^n \mu_i=0$ and $\sum_{i=1}^n \mu_i^2=-2m$.
Applying Lemma~\ref{lem:l1l2} to $z_i=\mu_i$ gives
\begin{equation}\label{eq:l1l2-mu}
\Bigl(\sum_{i=1}^n |\mu_i|\Bigr)^2 \ge 2\sum_{i=1}^n |\mu_i|^2.
\end{equation}
Moreover, since $|\mu_i|^2=|\mu_i^2|$,
\begin{equation}\label{eq:l2-lower-mu}
\sum_{i=1}^n |\mu_i|^2=\sum_{i=1}^n |\mu_i^2|
\ \ge\ 
\Bigl|\sum_{i=1}^n \mu_i^2\Bigr|
\ =\ 2m.
\end{equation}
Combining \eqref{eq:l1l2-mu} and \eqref{eq:l2-lower-mu} yields $\Eper(G)^2\ge 4m$, hence
$\Eper(G)\ge 2\sqrt{m}$.
\end{proof}

\begin{theorem}\label{thm:unique-equality}
Let $G$ be a simple graph on $n$ vertices with $m$ edges. Then $\Eper(G)=2\sqrt m$
if and only if $G$ is a star graph on $m+1$ vertices together with $n-m-1$ isolated vertices, that is,
\[
G \cong K_{1,m}\ \sqcup\ (n-m-1)K_1.
\]
\end{theorem}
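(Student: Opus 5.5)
The plan is to prove both directions by forcing equality in each step of the proof of Theorem~\ref{thm:lower-extremal}. That pins down the permanental polynomial completely, and its vanishing coefficients $c_3$ and $c_4$ then determine the structure of $G$.

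\textbf{Sufficiency.} For $G\cong K_{1,m}\sqcup(n-m-1)K_1$, I will compute $\pi(G,x)$ directly. In $\per(xI-A(G))$ the only permutations contributing nonzero products are the identity and the $m$ transpositions swapping the center with a leaf. Each transposition contributes $(-1)(-1)x^{n-2}=x^{n-2}$. Hence
\[
\pi(G,x)=x^n+m\,x^{n-2}=x^{n-2}(x^2+m).
\]
The roots are $\pm\ii\sqrt m$ together with $n-2$ zeros, so $\Eper(G)=2\sqrt m$. This also covers $m=0$, where $G$ is edgeless.

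\textbf{Necessity, step 1: pin down the polynomial.} Suppose $\Eper(G)=2\sqrt m$. Then both \eqref{eq:l1l2-mu} and \eqref{eq:l2-lower-mu} must be equalities. If $m=0$, $G$ is edgeless and is of the required form, so assume $m\ge1$. Equality in Lemma~\ref{lem:l1l2} with the roots not all zero gives roots $\alpha,-\alpha$ for some $\alpha\neq 0$, with all other roots $0$. So $\pi(G,x)=x^{n-2}(x^2-\alpha^2)$. Corollary~\ref{cor:mu-sums} gives $2\alpha^2=-2m$, hence
\[
\pi(G,x)=x^{n-2}(x^2+m).
\]
In particular $c_3=c_4=0$.

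\textbf{Necessity, step 2: extract the structure.} From $c_3=-2t$ in Lemma~\ref{lem:c1c2c3}, $G$ is triangle-free. For $c_4$, Lemma~\ref{lem:principal-expansion} gives $c_4=\sum_{|S|=4}\per(A(G)[S])$. Each summand is nonnegative, since it counts permutations of $S$ all of whose cycles use edges of $G[S]$. If $S$ spans two disjoint edges $ab,cd$, the permutation $(ab)(cd)$ contributes $1$, so $\per(A(G)[S])\ge1$. Therefore $c_4=0$ forces $G$ to have no two disjoint edges, i.e.\ matching number $1$.

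The key structural fact is that a graph in which every two edges meet is either a star or a triangle, apart from isolated vertices. Excluding the triangle, the edges of $G$ form $K_{1,m}$, so $G\cong K_{1,m}\sqcup(n-m-1)K_1$.

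The main point needing care is the claim that all terms in $c_4$ are nonnegative, so that a single $2$-matching already makes $c_4>0$. This is the cancellation-free feature of permanents, in contrast with determinants, and it is what makes the argument work.
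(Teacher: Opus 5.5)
Your proof is correct and follows the paper's argument essentially step for step. You force equality in Lemma~\ref{lem:l1l2} to get $\pi(G,x)=x^{n-2}(x^2+m)$, read off triangle-freeness from $c_3$, and use nonnegativity of the principal permanents with $|S|=4$ to rule out two disjoint edges. The only difference is that you cite as a known fact that a graph whose edges pairwise intersect is a star or a triangle (plus isolated vertices). The paper instead proves this directly: it takes a maximum-degree vertex $c$ and shows that any edge avoiding $c$ would have to close a triangle.
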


\begin{proof}
We first prove the forward direction. Assume $\Eper(G)=2\sqrt m$ and let
$\mu_1,\dots,\mu_n$ be the roots of $\pi(G,x)$.
If $m=0$, then $G$ has no edges, so $A(G)=0$ and hence $\pi(G,x)=\per(xI)=x^n$, which implies
$G\cong nK_1$. Thus assume $m\ge 1$.
Equality in Theorem~\ref{thm:lower-extremal} forces equality in \eqref{eq:l1l2-mu}.
Since also $\sum_i \mu_i=0$ by Corollary~\ref{cor:mu-sums}, the equality case of Lemma~\ref{lem:l1l2}
implies that there exist distinct indices $p\ne q$ and $\alpha\ne 0$ such that
\[
\mu_p=\alpha,\qquad \mu_q=-\alpha,\qquad \mu_i=0\ \text{ for all } i\notin\{p,q\}.
\]
Using Corollary~\ref{cor:mu-sums} again, we get
\[
-2m=\sum_{i=1}^n \mu_i^2=\alpha^2+(-\alpha)^2=2\alpha^2,
\]
so $\alpha^2=-m$ and hence $\alpha=\pm \ii\sqrt m$. Therefore
\begin{equation}\label{eq:eqcase-poly}
\pi(G,x)=x^{n-2}(x^2+m)=x^n+m x^{n-2}.
\end{equation}
From~\eqref{eq:eqcase-poly} the coefficient of $x^{n-3}$ is $0$, hence Lemma~\ref{lem:c1c2c3} gives
$t=0$, i.e., $G$ is triangle-free. Also, for every $k\ge 4$ the coefficient of $x^{n-k}$ in
\eqref{eq:eqcase-poly} is $0$. By Lemma~\ref{lem:principal-expansion},
\[
\sum_{\substack{S\subseteq [n]\\ |S|=k}}\per(A(G)[S])=0 \qquad (k\ge 4).
\]
Since each $\per(A(G)[S])\ge 0$, it follows that
\begin{equation}\label{eq:eqcase-principalzero}
\per(A(G)[S])=0 \qquad \text{for every }S\subseteq[n]\text{ with }|S|\ge 4.
\end{equation}
We claim that $G$ has no two vertex-disjoint edges. Indeed, if $\{a,b\}$ and $\{c,d\}$ are disjoint
edges and $S=\{a,b,c,d\}$, then the permutation $(a\,b)(c\,d)$ contributes $1$ to $\per(A(G)[S])$,
contradicting~\eqref{eq:eqcase-principalzero}. Hence any two edges of $G$ share a common endpoint.

Let $c$ be a vertex of maximum degree $\Delta\ge 1$. If $\Delta=1$, then $G$ has at most one edge,
so $G\cong K_{1,1}\sqcup (n-2)K_1$, which matches the claim for $m=1$.
Assume $\Delta\ge 2$ and choose two distinct neighbors $u$ and $v$ of $c$.
If there were an edge $e$ not incident to $c$, then $e$ must meet both $\{c,u\}$ and $\{c,v\}$.
Since $e$ avoids $c$, this forces $e=\{u,v\}$, creating a triangle $cuv$, a contradiction.
Therefore every edge of $G$ is incident to $c$, so the non-isolated part of $G$ is a star with
center $c$ and $m$ edges. Hence $G\cong K_{1,m}\sqcup (n-m-1)K_1$.

For the converse direction, if $G\cong K_{1,m}\sqcup (n-m-1)K_1$, then
$\pi(G,x)=x^{n-2}(x^2+m)$ and the permanental roots are $0$ (with multiplicity $n-2$) and
$\pm \ii \sqrt m$, so $\Eper(G)=2\sqrt m$.
\end{proof}

\begin{corollary}\label{cor:connected-min}
If $G$ is connected on $n\ge 2$ vertices, then
\[
\Eper(G)\ge 2\sqrt{n-1},
\]
with equality iff $G\cong K_{1,n-1}$.
\end{corollary}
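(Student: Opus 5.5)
The plan is to derive this from Theorem~\ref{thm:lower-extremal} and Theorem~\ref{thm:unique-equality}, together with the elementary fact that a connected graph on $n$ vertices has at least $n-1$ edges.

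First I bound the number of edges. Since $G$ is connected on $n\ge 2$ vertices, it contains a spanning tree, so $m\ge n-1\ge 1$. Theorem~\ref{thm:lower-extremal} gives $\Eper(G)\ge 2\sqrt m$. Because $x\mapsto 2\sqrt x$ is increasing on $[0,\infty)$, this yields $\Eper(G)\ge 2\sqrt{n-1}$.

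For the equality case, suppose $\Eper(G)=2\sqrt{n-1}$. The chain
\[
2\sqrt{n-1}\le 2\sqrt m\le \Eper(G)=2\sqrt{n-1}
\]
forces both inequalities to be equalities. Hence $m=n-1$, and $\Eper(G)=2\sqrt m$. Theorem~\ref{thm:unique-equality} then gives $G\cong K_{1,m}\sqcup(n-m-1)K_1=K_{1,n-1}$, with no isolated vertices left over since $n-m-1=0$. The same conclusion also follows from connectivity alone, because a connected graph on $n\ge 2$ vertices has no isolated vertices.

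For the converse, $K_{1,n-1}$ is connected and has $m=n-1$ edges. By Theorem~\ref{thm:unique-equality} its permanental energy is $2\sqrt{n-1}$.

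There is no real obstacle here. The only point needing care is that equality must be tracked through both inequalities, so that $m=n-1$ is forced before Theorem~\ref{thm:unique-equality} is invoked.
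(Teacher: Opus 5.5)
Your proof is correct and follows essentially the same route as the paper. Both arguments use $m\ge n-1$ with Theorem~\ref{thm:lower-extremal}, track equality to force $m=n-1$ and $\Eper(G)=2\sqrt m$, and then invoke Theorem~\ref{thm:unique-equality}; the only cosmetic difference is that for the converse you cite Theorem~\ref{thm:unique-equality}, whereas the paper directly notes $\pi(K_{1,n-1},x)=x^{n-2}(x^2+n-1)$.
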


\begin{proof}
Let $G$ have $m$ edges. Since $G$ is connected, $m\ge n-1$, so Theorem~\ref{thm:lower-extremal} gives
\[
\Eper(G)\ge 2\sqrt m\ge 2\sqrt{n-1}.
\]
If equality holds, then $m=n-1$ and Theorem~\ref{thm:unique-equality} forces $G$ to be a star, hence
$G\cong K_{1,n-1}$.
Conversely, if $G\cong K_{1,n-1}$, then $\pi(G,x)=x^{n-2}(x^2+n-1)$ and hence
$\Eper(G)=2\sqrt{n-1}$.
\end{proof}

\subsection{Upper bounds}

For a square matrix $M$, let $\rho(M)$ be its spectral radius, that is, the largest absolute value among its eigenvalues.
In particular, for a graph $G$, we write $\rho(G):=\rho(A(G))$.
We use a standard bound of Brenner and Brualdi~\cite{BrennerBrualdi1967} on the roots of $\per(xI-A)$ to upper bound
$\Eper(G)$.

\begin{lemma}[Brenner--Brualdi {\cite{BrennerBrualdi1967}}]\label{lem:BB-disk}
Let $A\ge 0$ be an $n\times n$ matrix. Every root $\mu$ of $\per(xI-A)$ satisfies $|\mu|\le \rho(A)$.
\end{lemma}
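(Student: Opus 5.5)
The plan is to prove the contrapositive: if $|\mu|>\rho(A)$ then $\per(\mu I-A)\neq 0$. First I reduce to the irreducible case. The roots of $\per(xI-A)$ depend continuously on the entries of $A$, since they are roots of a monic polynomial whose coefficients are polynomials in the entries by Lemma~\ref{lem:principal-expansion}. The spectral radius is also continuous. So I would replace $A$ by $A+\varepsilon J$, with $J$ the all-ones matrix, prove the bound for this positive matrix, and let $\varepsilon\to 0$. Hence assume $A>0$.

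By Perron--Frobenius there is a vector $v>0$ with $Av=\rho v$, where $\rho=\rho(A)$. Let $D=\mathrm{diag}(v)$ and $B=D^{-1}AD$. Then $B\ge 0$ and every row sum of $B$ equals $\rho$. The key algebraic observation is that the permanent is invariant under diagonal similarity. For each $\sigma\in S_n$, the term $\prod_i m_{i\sigma(i)}$ is multiplied by $\prod_i v_i^{-1}v_{\sigma(i)}=1$. Hence $\per(\mu I-A)=\per(\mu I-B)$. The matrix $M:=\mu I-B$ is strictly row diagonally dominant: $|m_{ii}|\ge |\mu|-b_{ii}$, and the off-diagonal absolute row sum is $R_i:=\sum_{j\ne i}b_{ij}=\rho-b_{ii}$. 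Each slack $|m_{ii}|-R_i\ge|\mu|-\rho$ is therefore positive.

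It then suffices to prove a permanental analogue of the Ostrowski bound: if $M$ is strictly row diagonally dominant, then
\[
|\per(M)|\ \ge\ \prod_{i=1}^n\bigl(|m_{ii}|-R_i\bigr)\ >\ 0 .
\]
I would prove this by induction on $n$ using Laplace expansion along the last row:
\[
\per(M)=m_{nn}\per(M_{nn})+\sum_{j<n}m_{nj}\per\bigl(M(n\mid j)\bigr).
\]
Here $M_{nn}$ is again diagonally dominant, so the induction hypothesis applies to it. The needed extra ingredient is a comparison $|\per(M(n\mid j))|\le|\per(M_{nn})|$ for $j<n$. To get it I would strengthen the induction hypothesis to include these ``replaced-row'' permanents. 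Specifically, let $p_j$ be the permanent of $M$ with row $n$ replaced by $e_j^{T}$. I would try to show that the maximum of $|p_j|$ over $j$ is attained at $j=n$, by expanding each $p_j$ ($j<n$) along the row of $M_{nn}$ that now lacks its diagonal partner and using dominance of that row. Given the comparison, $|\per M|\ge(|m_{nn}|-R_n)|\per M_{nn}|$, and the product bound follows.

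The main obstacle is exactly this comparison of the off-diagonal minors with the principal minor. For determinants it comes for free from Cramer's rule and a maximum-modulus argument on the solution vector, but for permanents there is no underlying linear system. If the direct inductive comparison fails, my fallback is to work with the nonnegative structure of $B$ itself. I would write $\per(\mu I-B)=\mu^n\sum_{S}(-1/\mu)^{|S|}\per(B[S])$ and use the row-sum normalization to bound $\per(B[S])$ by appropriate row-sum products. This should yield a term-by-term domination by $\mu^n$ showing the expansion cannot vanish when $|\mu|>\rho$.
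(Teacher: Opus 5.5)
Your reductions are sound. Continuity handles reducible $A$, the Perron vector gives $B=D^{-1}AD$ with all row sums equal to $\rho$, the permanent is invariant under diagonal similarity, and $\mu I-B$ is strictly row diagonally dominant when $|\mu|>\rho$. The permanental Ostrowski bound you then aim for is in fact true. The gap is that you never prove it, and it carries the entire content of the lemma; the paper itself gives no proof and only cites Brenner--Brualdi. Your comparison $|\per M(n\mid j)|\le|\per M_{nn}|$ is not an auxiliary step but is equivalent to the conclusion. Suppose it failed for some strictly dominant first $n-1$ rows. Then set $m_{nn}=1$, set $m_{nj}=-\per M_{nn}/\per M(n\mid j)$, which has modulus $<1$, and make the rest of row $n$ zero. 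This gives a strictly dominant $M$ with $\per M=0$.

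The induction you sketch does not deliver the comparison. Expanding $\per M(n\mid j)$ along row $j$ and applying the hypothesis to $M$ with row and column $j$ deleted gives only $|\per M(n\mid j)|\le R_j|q|$, where $q=\per M[S]$ and $S=[n]\setminus\{j,n\}$. The corresponding lower bound is $|\per M_{nn}|\ge(|m_{jj}|-R_j+|m_{jn}|)|q|$. These two bounds compare only if $2R_j-|m_{jn}|\le|m_{jj}|$, which is roughly double dominance. Without a permanental Cramer rule, nothing recovers that factor of two.

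Your fallback fails outright, because termwise domination requires $\per(I+B/|\mu|)<2$. Take $B=A(2K_2)$, so $\rho=1$. Then $\per(I+B/|\mu|)=(1+|\mu|^{-2})^2\ge 2$ for all $1<|\mu|\le\sqrt{1+\sqrt2}$, even though the roots are only $\pm\ii$.

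What works is to compare with a determinant rather than with other permanents of the same matrix. Put $E=-A/\mu$. Then $\per(I+E)$ and $\det(I-|E|)$ are both sums over families of vertex-disjoint directed cycles of length $\ge2$, with cycle weights $\prod_{a\in C}e_a$ and $-\prod_{a\in C}|e_a|$ respectively. For $|\mu|>\rho(A)$, every principal minor of the M-matrix $I-|E|$ is positive. Deleting cycles one at a time, this gives positivity of the signed sum over every subfamily of cycles. That is exactly the hypothesis of the Shearer/Scott--Sokal ratio induction for hard-core gases. The induction yields $|\per(\mu I-A)|\ge\det(|\mu|I-A)>0$, which proves the lemma with no Perron scaling at all. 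Combined with Ostrowski's bound for $\det(I-|E|)$, it also gives your product inequality.
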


\begin{theorem}\label{thm:upper-rho}
Let $G$ be a simple graph on $n$ vertices with spectral radius $\rho=\rho(G)$. Then
\[
\Eper(G)\le n\rho.
\]
\end{theorem}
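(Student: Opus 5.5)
The plan is to apply Lemma~\ref{lem:BB-disk} directly to $A=A(G)$ and then sum over the roots. The adjacency matrix $A(G)$ is a $0$--$1$ matrix, so $A(G)\ge 0$ entrywise and the lemma's hypothesis holds. By definition $\pi(G,x)=\per(xI-A(G))$, so its roots $\mu_1,\dots,\mu_n$, counted with multiplicity, are exactly the roots of $\per(xI-A)$ for this $A$. Lemma~\ref{lem:BB-disk} therefore gives $|\mu_i|\le \rho(A(G))=\rho$ for every $i$.

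Summing this bound over the $n$ roots, and using that $\pi(G,x)$ is monic of degree $n$ (Lemma~\ref{lem:c1c2c3} gives $c_0=1$), so it has exactly $n$ roots with multiplicity, we get
\[
\Eper(G)=\sum_{i=1}^n|\mu_i|\le \sum_{i=1}^n\rho=n\rho .
\]

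There is no substantive obstacle here; all the depth is in the Brenner--Brualdi bound, which we take as given. The only points to check are that the lemma applies to every root, including repeated roots and the root $0$, and that $\rho(G)$ is the same spectral radius as $\rho(A)$ in the lemma. Both hold by the definitions above. The case $m=0$ is consistent with the bound: then $\rho=0$ and $\pi(G,x)=x^n$, so both sides equal $0$.
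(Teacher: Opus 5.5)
Your proof is correct and follows the paper's argument: apply the Brenner--Brualdi bound (Lemma~\ref{lem:BB-disk}) to the nonnegative matrix $A(G)$ to get $|\mu_i|\le\rho$ for each of the $n$ roots, then sum. Your additional checks (monicity, multiplicities, the edgeless case) are harmless extras.
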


\begin{proof}
Let $\mu_1,\dots,\mu_n$ be the roots of $\pi(G,x)=\per(xI-A(G))$.
By Lemma~\ref{lem:BB-disk} (applied to $A(G)\ge 0$), we have $|\mu_i|\le \rho$ for all $i$.
Summing gives $\Eper(G)\le n\rho$.
\end{proof}

Thus, any upper bound on $\rho(G)$ immediately yields a corresponding upper bound on $\Eper(G)$
via Theorem~\ref{thm:upper-rho}.

\begin{corollary}\label{cor:upper-m}
Let $G$ be a simple graph on $n$ vertices with $m$ edges. Then
\[
\Eper(G)\le \frac{n(\sqrt{8m+1}-1)}{2}.
\]
\end{corollary}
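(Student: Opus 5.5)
The plan is to combine Theorem~\ref{thm:upper-rho} with Stanley's classical bound on the spectral radius in terms of the number of edges. That is, we show
\[
\rho(G)\le \frac{\sqrt{8m+1}-1}{2}
\]
and then multiply by $n$.

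To keep the argument self-contained, we would prove this bound directly. Let $x\ge 0$ be a unit Perron eigenvector of $A=A(G)$, so that $Ax=\rho x$. Fix a vertex $i$. Since $A_{ii}=0$, the Cauchy--Schwarz inequality gives
\[
\rho^2x_i^2=\Bigl(\sum_{j\sim i}x_j\Bigr)^2\le d_i\sum_{j\sim i}x_j^2\le d_i\,(1-x_i^2).
\]
Summing over $i$ yields
\[
\rho^2\le 2m-\sum_i d_i x_i^2.
\]
Next we need a lower bound on $\sum_i d_ix_i^2$ in terms of $\rho$. We have
\[
\rho=x^{\top}Ax=2\sum_{ij\in E}x_ix_j\le \sum_{ij\in E}(x_i^2+x_j^2)=\sum_i d_ix_i^2.
\]
Combining the two gives $\rho^2+\rho\le 2m$. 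Solving this quadratic inequality gives $\rho\le(\sqrt{8m+1}-1)/2$.

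Finally, Theorem~\ref{thm:upper-rho} gives
\[
\Eper(G)\le n\rho\le n\,\frac{\sqrt{8m+1}-1}{2}.
\]

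The step that needs the most care is the first inequality, specifically the use of $1-x_i^2$ in place of $1$. This refinement is what produces the linear term $\rho$ that sharpens the bound from $\sqrt{2m}$ to Stanley's bound. It is valid because $i$ is not among its own neighbours, so $\sum_{j\sim i}x_j^2\le\sum_{j\ne i}x_j^2=1-x_i^2$.

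Alternatively, one could simply cite Stanley's inequality as a standard result, in which case the corollary is immediate from Theorem~\ref{thm:upper-rho}.
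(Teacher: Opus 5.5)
Your proposal is correct and matches the paper's proof: the paper likewise just combines Theorem~\ref{thm:upper-rho} with Stanley's bound $\rho(G)\le(\sqrt{8m+1}-1)/2$, which it cites rather than proves. Your self-contained derivation of $\rho^2+\rho\le 2m$ is also valid. It uses a unit Perron vector, Cauchy--Schwarz with the refinement $\sum_{j\sim i}x_j^2\le 1-x_i^2$, and the estimate $x^{\top}Ax\le\sum_i d_ix_i^2$.
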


\begin{proof}
Combine Theorem~\ref{thm:upper-rho} with Stanley's bound
$\rho(G)\le (\sqrt{8m+1}-1)/2$~\cite{Stanley1987}.
\end{proof}

\begin{corollary}\label{cor:upper-n}
For every simple graph $G$ on $n\ge 2$ vertices,
\[
\Eper(G)\le n\Delta(G)\le n(n-1).
\] 
\end{corollary}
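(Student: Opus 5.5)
The plan is to combine Theorem~\ref{thm:upper-rho} with the classical fact that the spectral radius of a nonnegative matrix is at most its maximum row sum. For the adjacency matrix $A(G)$ the row sums are the vertex degrees. Hence $\rho(G)\le \Delta(G)$, and Theorem~\ref{thm:upper-rho} immediately gives $\Eper(G)\le n\rho(G)\le n\Delta(G)$.

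To keep the argument self-contained, I would prove $\rho(G)\le\Delta(G)$ directly. Since $A(G)$ is real symmetric, its eigenvalues are real, so let $\lambda$ be one with $|\lambda|=\rho(G)$ and let $v\neq 0$ be an eigenvector. Choose an index $i$ with $|v_i|=\max_j|v_j|>0$. Then
\[
|\lambda|\,|v_i|=\Bigl|\sum_{j\sim i} v_j\Bigr|\le \deg(i)\,|v_i|\le \Delta(G)\,|v_i|,
\]
and dividing by $|v_i|$ yields $\rho(G)\le\Delta(G)$. Equivalently, one could cite the standard fact that $\rho(M)\le\|M\|_\infty$ for any square matrix $M$, applied to $M=A(G)$.

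The second inequality $n\Delta(G)\le n(n-1)$ holds because $G$ is simple on $n$ vertices, so every vertex has at most $n-1$ neighbours and $\Delta(G)\le n-1$. The hypothesis $n\ge 2$ only ensures the bound is nontrivially stated; the inequality itself is trivially true for $n=1$ as well.

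There is no real obstacle here. The only step needing care is justifying $\rho\le\Delta$, which the maximum-entry eigenvector argument above handles in one line.
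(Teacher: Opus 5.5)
Your proposal is correct and matches the paper's proof, which simply combines Theorem~\ref{thm:upper-rho} with $\rho(G)\le\Delta(G)\le n-1$. Your maximum-entry eigenvector argument makes explicit the standard fact $\rho(G)\le\Delta(G)$, which the paper uses without proof.
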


\begin{proof}
Use $\rho(G)\le \Delta(G)\le n-1$ in Theorem~\ref{thm:upper-rho}.
\end{proof}

\begin{remark}[Tightness of the upper bound]
Theorem~\ref{thm:upper-rho} is usually not tight, since it upper bounds all permanental roots by
$\rho(G)$. For instance, $\Eper(K_{1,n-1})=2\sqrt{n-1}$, while
Theorem~\ref{thm:upper-rho} gives $\Eper(K_{1,n-1})\le n\sqrt{n-1}$.
\end{remark}

\paragraph{Comparison with adjacency energy.}
It is well known that $2\sqrt m\le \E(G)\le \sqrt{2mn}$ and both bounds are tight ; see~\cite{Gutman1978}.
Theorem~\ref{thm:lower-extremal} shows that the same sharp lower bound $2\sqrt m$ holds for
$\Eper(G)$, but the upper bound $\sqrt{2mn}$ does not extend to $\Eper(G)$:
for $G=K_3$, $\pi(K_3,x)=x^3+3x-2$ has roots
$\mu_1\approx 0.5961$ and $\mu_{2,3}\approx -0.2980\pm 1.8073\,\ii$, hence
$\Eper(K_3)\approx 4.2596>\sqrt{18}\approx 4.2426$.

This motivates the extremal problem of maximizing $\Eper(G)$ over all graphs on $n$
vertices. Based on computations for all graphs up to $n\le 10$, we observe that the complete graph
is the maximizer; see Section~\ref{sec:complete-graphs}.

\section{Special families}\label{sec:families}
In this section we discuss graph families for which $\Eper(G)$ admits sharper information than the
universal bounds of Section~\ref{sec:bounds}.

\subsection{Bipartite families}\label{sec:families:bip}

For some bipartite graph classes, the permanental polynomial can be written in terms of the characteristic
polynomial of an associated matrix. This can force $\Eper(G)$ to coincide with the usual
adjacency energy $\E(G)$ and can also yield stronger upper bounds for such classes.

If $G$ is bipartite and has no cycle whose length is divisible by $4$, then Borowiecki~\cite{borowiecki1985spectrum}
proved the following relation between the permanental and characteristic polynomials.

\begin{theorem}\label{thm:c4kfree-perspectrum}
If $G$ is bipartite and contains no cycle of length $4k$ for any $k\ge 1$, then
\[
\pi(G,x)=\ii^{-n}\chi(G,\ii x).
\]
Equivalently, the permanental roots satisfy
$\{\mu_1,\dots,\mu_n\}=\{-\ii\lambda_1,\dots,-\ii\lambda_n\}$,
where $\lambda_1,\dots,\lambda_n$ are the adjacency eigenvalues (as a multiset).
\end{theorem}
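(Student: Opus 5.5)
We compare the coefficient expansions of the two polynomials term by term, using Lemma~\ref{lem:principal-expansion} for the permanent and its determinantal analogue for $\chi$. Write $\chi(G,x)=\sum_k b_k x^{n-k}$ with $b_k=(-1)^k\sum_{|S|=k}\det(A(G)[S])$. Then
\[
\ii^{-n}\chi(G,\ii x)=\sum_k b_k\,\ii^{-k}x^{n-k}.
\]
So the identity $\pi(G,x)=\ii^{-n}\chi(G,\ii x)$ is equivalent to
\[
c_k=\ii^{-k}b_k \quad\text{for every } k,
\]
that is, to
\[
\sum_{|S|=k}\per(A[S])=\ii^{-k}\sum_{|S|=k}\det(A[S]).
\]
Since $G$ is bipartite, it has no odd cycles, so only even $k=2r$ can contribute. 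For such $k$ the target becomes
\[
\per(A[S])=(-1)^r\det(A[S]) \quad\text{for each } S \text{ with } |S|=2r.
\]
We aim to prove this equality subset by subset.

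For a fixed $S$ with $|S|=2r$, expand both sides over permutations $\sigma$ of $S$ with $\prod_{i}a_{i\sigma(i)}\neq 0$. Each such $\sigma$ corresponds to a spanning subgraph of $G[S]$ whose components are edges (2-cycles of $\sigma$) and cycles of $G$ of length $\ge 3$ (each graph cycle arising from two orientations). Bipartiteness makes all these cycles even, so each has length $\equiv 2 \pmod 4$ by the hypothesis that no cycle has length $4k$. Now compute the sign. A cycle of length $\ell$ contributes $(-1)^{\ell-1}=-1$ to $\operatorname{sgn}\sigma$. If $\sigma$ has $c$ cycles in total (counting the 2-cycles) covering $2r$ vertices, then $\operatorname{sgn}\sigma=(-1)^{2r-c}=(-1)^c$. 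Since every cycle length is $\equiv 2 \pmod 4$, writing $\ell_j=4q_j+2$ gives $2r=\sum_j\ell_j\equiv 2c \pmod 4$, so $r\equiv c \pmod 2$. Hence $\operatorname{sgn}\sigma=(-1)^r$ for every contributing $\sigma$, and summing gives $\det(A[S])=(-1)^r\per(A[S])$, as required.

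For odd $k$, $A[S]$ has no contributing permutation, since any such permutation would contain an odd cycle. Thus both $\per$ and $\det$ vanish and the coefficients agree trivially.

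The root statement then follows. If $\chi(G,y)=\prod_j(y-\lambda_j)$, then
\[
\ii^{-n}\chi(G,\ii x)=\prod_j\ii^{-1}(\ii x-\lambda_j)=\prod_j(x+\ii\lambda_j),
\]
so the permanental roots are $-\ii\lambda_j$. The only delicate step is the sign bookkeeping: checking that 2-cycles, i.e.\ single edges of length $2\equiv2 \pmod 4$, fit the same parity count as longer cycles, and that the factor from two orientations of a cycle appears equally on both sides. This is routine once the count is organized by the total number $c$ of cycles.
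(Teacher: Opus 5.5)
Your proof is correct. The paper does not prove this theorem itself; it quotes it from Borowiecki. Your proposal therefore supplies a self-contained proof rather than reproducing the paper's. It is the standard Sachs-type comparison. You expand both $\pi(G,x)$ and $\ii^{-n}\chi(G,\ii x)$ over principal submatrices, using Lemma~\ref{lem:principal-expansion} and its determinantal analogue. This reduces the identity to a single claim about signs: every permutation of a $2r$-set contributing to $\per(A[S])$ has sign $(-1)^r$, because all its cycles have length $\equiv 2 \pmod 4$. Odd $|S|$ contributes nothing on either side. Your argument in fact gives the stronger minor-by-minor identity $\per(A[S])=(-1)^{|S|/2}\det(A[S])$ for every principal submatrix. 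Also, since you compare permutation by permutation, the ``two orientations per cycle'' bookkeeping you flag at the end never actually arises.
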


\begin{corollary}\label{cor:c4kfree-energy}
If $G$ is bipartite and contains no cycle of length $4k$ for any $k\ge 1$, then $\Eper(G)=\E(G)$.
\end{corollary}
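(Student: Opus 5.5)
The corollary follows directly from Theorem~\ref{thm:c4kfree-perspectrum}, which identifies the permanental roots with a rotation of the adjacency spectrum. Since $\pi(G,x)=\ii^{-n}\chi(G,\ii x)$, the roots of $\pi(G,x)$ are exactly the values $x$ with $\ii x\in\{\lambda_1,\dots,\lambda_n\}$, counted with multiplicity. That is, $\mu_j=-\ii\lambda_j$ after reindexing. The factor $\ii^{-n}$ is a nonzero constant, so it changes neither the roots nor their multiplicities. Note also that $\chi(G,\ii x)$ has degree $n$ in $x$ with leading coefficient $\ii^n$, so $\pi$ is monic of degree $n$ as it should be, and no roots are lost or gained.

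The key computation is the modulus of each root. For every $j$,
\[
|\mu_j|=|-\ii\lambda_j|=|-\ii|\,|\lambda_j|=|\lambda_j|.
\]
Summing over $j=1,\dots,n$ then gives
\[
\Eper(G)=\sum_{j=1}^n|\mu_j|=\sum_{j=1}^n|\lambda_j|=\E(G).
\]

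There is no real obstacle here. The only point requiring care is the multiset bookkeeping: we need the multiset correspondence in Theorem~\ref{thm:c4kfree-perspectrum} to respect multiplicities, so that repeated eigenvalues, such as $0$ with high multiplicity, are matched correctly. This holds because $x\mapsto \ii x$ is a linear change of variable, so the factorization $\chi(G,y)=\prod_j(y-\lambda_j)$ transforms as
\[
\pi(G,x)=\ii^{-n}\prod_j(\ii x-\lambda_j)=\prod_j(x+\ii\lambda_j),
\]
and this displays the roots $-\ii\lambda_j$ with their full multiplicities.
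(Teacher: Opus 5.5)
Your proposal is correct and follows the paper's proof: invoke Theorem~\ref{thm:c4kfree-perspectrum} to identify the permanental roots with $-\ii\lambda_j$, counted with multiplicity, and use $|-\ii\lambda_j|=|\lambda_j|$. Your explicit factorization $\pi(G,x)=\prod_j(x+\ii\lambda_j)$ is a tidy way to confirm the multiplicity bookkeeping that the paper states in passing.
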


\begin{proof}
By Theorem~\ref{thm:c4kfree-perspectrum}, the permanental roots are $\{-\ii\lambda_j\}_{j=1}^n$
(with multiplicities), hence $|\mu_j|=|\lambda_j|$ and $\Eper(G)=\E(G)$.
\end{proof}

An \emph{orientation} assigns a direction to each edge.  Since $G$ is bipartite, every cycle of $G$
has even length.  We say that a cycle is \emph{oddly oriented} in an orientation $\sigma$ if,
when traversing the cycle, an odd number of its edges are directed along the traversal.
Given an orientation $G^\sigma$, let $S(G^\sigma)$ be the matrix with
$S_{uv}=1$ if $u\to v$ is an arc of $G^\sigma$, $S_{uv}=-1$ if $v\to u$ is an arc, and
$S_{uv}=0$ otherwise.
Zhang and Li~\cite{zhang2014note} showed that if $G$ admits an orientation in which every cycle is
oddly oriented, then the permanental polynomial has an exact determinantal description.

\begin{theorem}[{\cite{zhang2014note}}]\label{thm:skew-det-model}
Let $G$ be bipartite. If $G$ admits an orientation $\sigma$ in which every cycle is oddly oriented, then
\[
\pi(G,x)=\per(xI-A(G))=\det\bigl(xI-S(G^\sigma)\bigr).
\]
\end{theorem}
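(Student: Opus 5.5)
We prove Theorem~\ref{thm:skew-det-model} by expanding both sides combinatorially and comparing term by term. Write $A=A(G)$ and $S=S(G^\sigma)$. By Lemma~\ref{lem:principal-expansion}, $\per(xI-A)=\sum_{k}(-1)^k x^{n-k}\sum_{|U|=k}\per(A[U])$. The analogous standard expansion for the determinant gives $\det(xI-S)=\sum_k(-1)^k x^{n-k}\sum_{|U|=k}\det(S[U])$. It therefore suffices to show that
\[
\per(A[U])=\det(S[U])\qquad\text{for every } U\subseteq[n].
\]
Since $A[U]=A(G[U])$ and $S[U]=S(G[U]^\sigma)$, and every cycle of $G[U]$ is a cycle of $G$ and hence oddly oriented, it is enough to prove $\per(A(H))=\det(S(H^\sigma))$ for a bipartite graph $H$ with such an orientation.

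Next expand both quantities over permutations. A permutation $\pi$ contributes a nonzero term to either side exactly when every $u\to\pi(u)$ is an edge. Such a $\pi$ decomposes into fixed points, which are impossible because the diagonal is zero, transpositions along edges, and directed cycles of length at least $3$. In $H$ these longer cycles are even cycles of $H$, because $H$ is bipartite. For $\per(A)$ each such $\pi$ contributes $1$. For $\det(S)$ it contributes $\operatorname{sgn}(\pi)\prod_u S_{u\pi(u)}$, and this factors over the cycles of $\pi$.

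\begin{itemize}
\item A transposition $(u\,v)$ has sign $-1$ and weight $S_{uv}S_{vu}=-1$, so its factor is $+1$.
\item A cycle of even length $\ell$ has sign $(-1)^{\ell-1}=-1$. Its weight is $(-1)^{b}$, where $b$ is the number of edges traversed against the orientation. Since the cycle is oddly oriented, the number $f$ of edges traversed along the orientation is odd, so $b=\ell-f$ is odd. The factor is then $(-1)\cdot(-1)=+1$.
\end{itemize}

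This holds for either traversal direction, since reversing the direction replaces $f$ by $\ell-f$, which is still odd. Hence every permutation contributes $+1$ to both sides, so $\per(A(H))=\det(S(H^\sigma))$, and the theorem follows.

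The main obstacle is the sign bookkeeping in the cycle factor: checking that the definition of ``oddly oriented'' matches the sign convention of $S$ for both traversal directions. The point that makes this work is that even length makes ``odd number forward'' equivalent to ``odd number backward''. Bipartiteness is used only to guarantee that all cycles in the permutation expansion are even.
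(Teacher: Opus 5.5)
Your proof is correct and complete. The paper gives no argument for this statement of its own; it quotes the result from \cite{zhang2014note}, so there is no in-paper proof to compare with. What you wrote is the standard Sachs-type sign bookkeeping behind results of this kind, carried out directly on permutations. You first reduce, via Lemma~\ref{lem:principal-expansion} and its determinantal analogue, to the minor-level identity $\per(A[U])=\det(S[U])$ for every $U$. You then check that every permutation supported on edges contributes $+1$ to $\det S[U]$:
\begin{itemize}
\item a transposition, because $S_{uv}S_{vu}=-1$;
\item an even cycle, because its sign $-1$ is cancelled by an odd number of backward arcs.
\end{itemize}
Compared with simply citing the result, your route is self-contained and shows exactly where each hypothesis enters. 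Bipartiteness is used only to make every permutation cycle of length at least $3$ even. For an odd cycle the two traversal directions would contribute opposite signs and cancel in the determinant, which is why, for instance, $\per(A(K_3))=2$ while every $3\times 3$ skew-symmetric determinant vanishes. The odd-orientation hypothesis is used only to fix the sign of each even cycle, and it is independent of the traversal direction, as you note. One cosmetic point: write $\{u,\pi(u)\}\in E(H)$ rather than ``$u\to\pi(u)$ is an edge'', since the arrow is already reserved for arcs of $G^\sigma$.
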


This gives a McClelland-type upper bound for this class.

\begin{theorem}\label{thm:det-mcclelland}
Let $G$ be a bipartite graph on $n$ vertices with $m$ edges. Suppose that $G$ admits an orientation
$\sigma$ in which every cycle of $G$ is oddly oriented. Then
\[
\Eper(G)\le \sqrt{2mn}.
\]
\end{theorem}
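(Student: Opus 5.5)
By Theorem~\ref{thm:skew-det-model}, $\pi(G,x)=\det(xI-S)$ with $S:=S(G^\sigma)$. Hence the permanental roots $\mu_1,\dots,\mu_n$ are exactly the eigenvalues of $S$, counted with multiplicity. The plan is to run McClelland's argument for $S$ in place of $A(G)$.

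First, $S$ is real and skew-symmetric: $S^\top=-S$. So $\ii S$ is Hermitian. The eigenvalues of $S$ are therefore purely imaginary, $\mu_j=\ii\beta_j$ with $\beta_j\in\mathbb{R}$. Moreover $S$ is normal, so it is unitarily diagonalizable. This gives
\[
\sum_{j=1}^n|\mu_j|^2=\tr(SS^\top)=\sum_{u,v}S_{uv}^2 = 2m,
\]
because each edge contributes two entries equal to $\pm1$ and all other entries vanish. As a consistency check, Corollary~\ref{cor:mu-sums} gives $\sum_j\mu_j^2=-2m$. Since every $\mu_j^2=-\beta_j^2\le 0$, this already yields $\sum_j|\mu_j|^2=\sum_j\beta_j^2=2m$ directly, without the matrix identity.

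Second, apply the Cauchy--Schwarz inequality to the vectors $(|\mu_1|,\dots,|\mu_n|)$ and $(1,\dots,1)$:
\[
\Eper(G)=\sum_{j=1}^n|\mu_j|\le \sqrt{n}\Bigl(\sum_{j=1}^n|\mu_j|^2\Bigr)^{1/2}=\sqrt{2mn}.
\]

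The only delicate step is identifying the multiset of permanental roots with the spectrum of the skew-symmetric matrix $S$, including multiplicities. This follows immediately from the polynomial identity in Theorem~\ref{thm:skew-det-model}. The one other point that needs care is that $\sum_j|\mu_j|^2$ equals $2m$ exactly rather than merely being at least $2m$, which was the inequality used in \eqref{eq:l2-lower-mu}. That equality requires the roots to be purely imaginary, and normality (equivalently, skew-symmetry) of $S$ supplies it. Everything else is routine.
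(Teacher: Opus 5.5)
Your proof is correct and is essentially the paper's argument: the paper also identifies the permanental roots with the spectrum of the skew-symmetric $S$ (written via the Hermitian $H=-\ii S$), gets $\sum_j|\mu_j|^2=\tr(H^2)=2m$, and finishes with Cauchy--Schwarz. Your alternative route to $\sum_j|\mu_j|^2=2m$, combining Corollary~\ref{cor:mu-sums} with the fact that the roots are purely imaginary, is a valid shortcut; the only quibble is wording, since skew-symmetry implies normality but is not equivalent to it.
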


\begin{proof}
Write $S:=S(G^\sigma)$. By Theorem~\ref{thm:skew-det-model}, $\pi(G,x)=\det(xI-S)$.
Set $H:=-\ii S$. Since $S^{\mathsf T}=-S$ and $S$ is real, $H$ is Hermitian and
\[
\pi(G,x)=\det(xI-\ii H).
\]
Thus the roots of $\pi(G,x)$ are $\ii\lambda_1,\dots,\ii\lambda_n$, where
$\lambda_1,\dots,\lambda_n\in\mathbb{R}$ are the eigenvalues of $H$. Hence
\[
\Eper(G)=\sum_{j=1}^n|\lambda_j|.
\]
By Cauchy--Schwarz,
\[
\Eper(G)^2 \le n\sum_{j=1}^n \lambda_j^2 = n\,\tr(H^2).
\]
Moreover,
\[
\tr(H^2)=\sum_{u,v}|h_{uv}|^2=\sum_{u,v}|s_{uv}|^2=2m,
\]
because $S$ has exactly $2m$ off-diagonal entries equal to $\pm 1$.
Therefore $\Eper(G)^2\le 2mn$, that is, $\Eper(G)\le \sqrt{2mn}$.
\end{proof}

\subsection{Forests and tree extremals}\label{sec:forests}

Forests are bipartite and contain no cycles, hence satisfy Theorem~\ref{thm:c4kfree-perspectrum}.
Therefore, for every forest $F$ one has $\Eper(F)=\E(F)$.

\begin{corollary}\label{cor:tree-extremals}
Among all trees $T$ on $n$ vertices,
\[
\Eper(K_{1,n-1}) \le \Eper(T) \le \Eper(P_n),
\]
with equality on the left if and only if  $T\cong K_{1,n-1}$ and equality on the right if and only if $T\cong P_n$.
\end{corollary}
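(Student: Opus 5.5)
The plan is to reduce the statement to the classical energy extremal result for trees. Since every tree $T$ is a forest, it is bipartite and has no cycles, so Theorem~\ref{thm:c4kfree-perspectrum} applies vacuously. Corollary~\ref{cor:c4kfree-energy} then gives $\Eper(T)=\E(T)$ for every tree $T$ on $n$ vertices, including $K_{1,n-1}$ and $P_n$. The claimed chain of inequalities is therefore equivalent to
\[
\E(K_{1,n-1})\le \E(T)\le \E(P_n),
\]
with the same equality characterizations. This is Gutman's well-known theorem on the extremal energies of trees~\cite{Gutman1978}, and I would simply cite it.

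For the left inequality, a self-contained argument is available from results already in the paper. A tree on $n$ vertices has $m=n-1$ edges. Theorem~\ref{thm:lower-extremal} gives $\Eper(T)\ge 2\sqrt{n-1}$, and we already know $\Eper(K_{1,n-1})=2\sqrt{n-1}$. For the equality case, Theorem~\ref{thm:unique-equality} shows that equality forces $T\cong K_{1,m}\sqcup(n-m-1)K_1$. Since $T$ is connected with $m=n-1$, this means $T\cong K_{1,n-1}$. This is exactly Corollary~\ref{cor:connected-min} restricted to trees.

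For the right inequality, I would rely on Gutman's quasi-order argument rather than reprove it; this step is the main obstacle. The ingredients are as follows. For a forest, the coefficients of $\chi(T,x)$ alternate in sign, with $|a_{2k}|=m(T,k)$, the number of $k$-matchings. The Coulson integral formula then gives
\[
\E(T)=\frac{2}{\pi}\int_0^\infty x^{-2}\ln\Bigl(\sum_{k\ge 0} m(T,k)\,x^{2k}\Bigr)\,dx.
\]
This expression is strictly increasing in each $m(T,k)$. It then suffices to show that $m(T,k)\le m(P_n,k)$ for all $k$, with strict inequality for some $k$ whenever $T\not\cong P_n$. This is proved by induction on $n$ via the recursion $m(G,k)=m(G-v,k)+m(G-u-v,k-1)$ for a pendant edge $uv$, combined with the standard transformation that moves branches to lengthen a path.

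Because all of this is classical, the written proof would be short: invoke Corollary~\ref{cor:c4kfree-energy} and cite Gutman's theorem for both bounds, noting in addition that the lower bound and its equality case also follow directly from Theorem~\ref{thm:unique-equality}.
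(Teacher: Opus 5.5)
Your proposal is correct and takes the same route as the paper: trees satisfy the hypothesis of Theorem~\ref{thm:c4kfree-perspectrum}, so $\Eper(T)=\E(T)$, and the statement then follows by citing Gutman's classical extremal theorem for the energy of trees. Your extra observation that the left inequality and its equality case also follow directly from Theorem~\ref{thm:unique-equality} (equivalently Corollary~\ref{cor:connected-min}) is a valid self-contained supplement, and your sketch of the standard Coulson-integral and matching-count argument for the right inequality is fine as background.
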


\begin{proof}
Since every tree is a forest, we have $\Eper(T)=\E(T)$.
Apply the classical extremal theorem for adjacency energy on trees~\cite{Gutman1978}.
\end{proof}
\subsection{Cycles}\label{sec:cycles}

Cycles can be handled explicitly. The next theorem gives $\Eper(C_n)$, with different behavior for even and odd $n$.

\begin{theorem}\label{thm:cycle-energy}
Let $n\ge 3$.
\begin{enumerate}
\item If $n$ is even, then
\[
\Eper(C_n)=\frac{4}{\sin(\pi/n)}.
\]
\item If $n$ is odd, then
\[
\lim_{\substack{n\to\infty\\ n\ \mathrm{odd}}}\frac{\Eper(C_n)}{n}=\frac{4}{\pi}.
\]
\end{enumerate}
\end{theorem}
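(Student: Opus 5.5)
The plan is to compute $\pi(C_n,x)$ in closed form through Chebyshev polynomials, solve for its roots explicitly, and then sum their moduli. For the first step, expand $\per(xI-A(C_n))$ over permutations. A permutation contributes only if its cycles are fixed points, transpositions along edges of $C_n$, or one of the two rotations of the whole cycle. A fixed point contributes $x$, a transposition $\{u,v\}$ with $uv\in E(C_n)$ contributes $(-1)^2=1$, and each full rotation contributes $(-1)^n$. Hence
\[
\pi(C_n,x)=\sum_{k\ge 0} m(C_n,k)\,x^{n-2k}+2(-1)^n ,
\]
where $m(C_n,k)$ is the number of $k$-matchings of $C_n$. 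The matching polynomial of a cycle is classical: $\sum_k(-1)^k m(C_n,k)x^{n-2k}=2T_n(x/2)$. Replacing $x$ by $\ii x$ and multiplying by $\ii^{-n}$ gives
\[
\pi(C_n,x)=2\,\ii^{-n}T_n(\ii x/2)+2(-1)^n .
\]

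Next, substitute $\ii x/2=\cos\theta$, so that $x=-2\ii\cos\theta$ and $|x|=2|\cos\theta|$. The root equation becomes $\cos(n\theta)=-(-1)^n\ii^{n}$. We will check that the $n$ solutions listed below give $n$ distinct values of $\cos\theta$; this accounts for all roots with multiplicity, since $\pi(C_n,x)$ has degree $n$.

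For $n$ even, the right-hand side equals $-(-1)^{n/2}$, which is real, so $\theta$ may be taken real.
\begin{itemize}
\item If $n\equiv 2\pmod 4$, then $\theta_j=2\pi j/n$ for $j=0,\dots,n-1$. This agrees with Corollary~\ref{cor:c4kfree-energy}, since here $C_n$ has no $4k$-cycle.
\item If $n\equiv 0\pmod 4$, then $\theta_j=(2j+1)\pi/n$.
\end{itemize}
In both cases $\Eper(C_n)=\sum_j 2|\cos\theta_j|$. This is a finite trigonometric sum, which we evaluate by splitting the index set according to the sign of $\cos\theta_j$ and summing the resulting geometric series of exponentials. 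Both subcases should give $4/\sin(\pi/n)$; in the second subcase it is the shift by $\pi/n$ that turns the familiar $4\cot(\pi/n)$ into $4\csc(\pi/n)$. The only care needed is to check the symmetric pairing of indices in each subcase.

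For $n$ odd, the right-hand side is $\pm\ii$. Write $n\theta=u+\ii v$ and use $\cos(u+\ii v)=\cos u\cosh v-\ii\sin u\sinh v$. The real part must vanish, so $\cos u=0$, and then $\sinh v=\mp\sin u=\pm1$, giving $|v|=\operatorname{arcsinh}1$. Hence the roots are
\[
\theta_j=\frac{(2j+1)\pi}{2n}+\ii\,\frac{c_j}{n},\qquad j=0,\dots,n-1,\quad |c_j|=\operatorname{arcsinh}1 .
\]
The elementary bound $|\cos(a+\ii b)-\cos a|\le \sinh|b|+(\cosh b-1)=O(1/n)$ holds uniformly in $a$. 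Therefore $\tfrac1n\Eper(C_n)=\tfrac1n\sum_j 2|\cos a_j|+O(1/n)$ with $a_j=(2j+1)\pi/(2n)$. This is a midpoint Riemann sum converging to $\tfrac1\pi\int_0^\pi 2|\cos a|\,da=4/\pi$.

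The main obstacle is the odd case. There we must verify that the $n$ complex $\theta_j$ (with the correct sign choices $c_j$) yield $n$ distinct roots and so exhaust the roots of $\pi(C_n,x)$. Degree counting plus distinctness of the $\cos\theta_j$, which holds because the real parts $a_j$ are distinct in $(0,\pi)$ and the imaginary parts are small, should settle this. The uniform error estimate is then routine.
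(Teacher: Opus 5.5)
Your strategy is sound and close to the paper's. Your formula $\pi(C_n,x)=2\ii^{-n}T_n(\ii x/2)+2(-1)^n$ is the Kasum et al.\ formula $\lambda_+^n+\lambda_-^n+2(-1)^n$ used in the appendix, because $2T_n(\ii x/2)=\ii^n(\lambda_+^n+\lambda_-^n)$; you derive it rather than cite it. Your odd-case estimate is the paper's argument (a uniform $O(1/n)$ perturbation plus a Riemann sum) written in $\cos$ rather than $\sinh$ coordinates.

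However, one step fails as stated: when $n$ is even, the $n$ listed solutions do \emph{not} give $n$ distinct values of $\cos\theta$. For $n\equiv 0\pmod 4$ you have $\cos\frac{(2j+1)\pi}{n}=\cos\frac{(2(n-1-j)+1)\pi}{n}$, so every root is double; for example $\pi(C_4,x)=x^4+4x^2+4=(x^2+2)^2$. For $n\equiv 2\pmod 4$ you have $\cos\frac{2\pi j}{n}=\cos\frac{2\pi(n-j)}{n}$, so every root other than $\pm 2\ii$ is double. So ``distinctness plus degree count'' cannot certify the multiset of roots in the even case. The value $4/\sin(\pi/n)$ depends precisely on those multiplicities: the distinct roots of $C_4$ alone would give $2\sqrt2$, not $4\sqrt2$.

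Your multisets are nevertheless the right ones, and the repair is short. For $n=2m\equiv 0\pmod 4$ use $T_{2m}(y)+1=2T_m(y)^2$, and for $n=2m\equiv 2\pmod 4$ use $T_{2m}(y)-1=2(y^2-1)U_{m-1}(y)^2$. Alternatively, use the single identity $T_n(y)-\cos\phi=2^{n-1}\prod_{j=0}^{n-1}\bigl(y-\cos\frac{\phi+2\pi j}{n}\bigr)$, valid for all complex $\phi$. You can check it for real $\phi\notin\pi\mathbb{Z}$, where the roots are distinct, and extend by continuity. This handles all parities at once and removes the need for any distinctness check. For $n\equiv 2\pmod 4$, your own remark via Corollary~\ref{cor:c4kfree-energy}, together with the adjacency spectrum $\{2\cos(2\pi j/n)\}$ of $C_n$, would also suffice. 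The paper sidesteps the issue by citing the Li--Zhang product formula, which already records multiplicities.

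Finally, the odd case, which you flag as the main obstacle, is the one where distinctness genuinely holds. The reason is not that the imaginary parts are small. It is that $\cos\alpha=\cos\beta$ forces $\alpha=\pm\beta+2\pi k$, which is impossible when the real parts of $\alpha,\beta$ are distinct and lie in $(0,\pi)$.
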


\begin{proof}
See Appendix~\ref{app:cycles}.
\end{proof}

\subsection{Complete graphs}\label{sec:complete-graphs}

The general bound $\Eper(G)\le n\rho(G)$ suggests that graphs with large spectral radius may
also have large permanental energy. In particular, the complete graph $K_n$ is a natural candidate.
Let $D_n$ denote the number of derangements of $[n]$, that is, permutations $\sigma\in S_n$ with
$\sigma(i)\neq i$ for all $i\in[n]$.

\begin{lemma}\label{lem:Kn-derangement}
For $n\ge 1$, $\per(A(K_n))=D_n$.
\end{lemma}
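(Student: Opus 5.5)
We argue directly from the definition of the permanent. The adjacency matrix of $K_n$ is $A(K_n)=J-I$, where $J$ is the all-ones matrix, so its entries are $a_{ij}=1$ for $i\neq j$ and $a_{ii}=0$. By definition,
\[
\per(A(K_n))=\sum_{\sigma\in S_n}\prod_{i=1}^n a_{i,\sigma(i)} .
\]
Each product $\prod_i a_{i,\sigma(i)}$ has factors in $\{0,1\}$. So the product equals $1$ exactly when $a_{i,\sigma(i)}=1$ for every $i$, that is, when $\sigma(i)\neq i$ for all $i$. Otherwise some factor $a_{i,i}=0$ appears and the product is $0$.

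The sum therefore counts permutations with no fixed point. These are precisely the derangements, so $\per(A(K_n))=D_n$.

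We check the edge case $n=1$ separately. Here $A(K_1)=(0)$ and $\per(A(K_1))=0$. Also $D_1=0$, since the only permutation of $[1]$ fixes $1$. Hence the identity holds for all $n\ge 1$.

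No step presents a real obstacle. The only point needing care is the convention at $n=1$, which the check above settles.
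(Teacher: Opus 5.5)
Your proof is correct and follows the same argument as the paper: diagonal entries are $0$ and off-diagonal entries are $1$, so exactly the fixed-point-free permutations contribute $1$ to $\per(A(K_n))$. Your separate check at $n=1$ is harmless but not needed, since the general argument already covers it: the identity is the only permutation of $[1]$ and it contributes $0$.
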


\begin{proof}
In $A(K_n)$ the diagonal entries are $0$ and all off-diagonal entries are $1$. Hence a permutation
$\sigma$ contributes $1$ to $\per(A(K_n))$ iff $\sigma(i)\neq i$ for all $i$, that is, iff $\sigma$ is a
derangement.
\end{proof}

\begin{lemma}\label{lem:Kn-derangement-lower}
For every $n\ge 1$,
\[
\Eper(K_n)\ \ge\ n\,D_n^{1/n}.
\]
\end{lemma}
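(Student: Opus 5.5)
The plan is to combine the AM--GM inequality with Vieta's formula for the constant term of $\pi(K_n,x)$. Write $\pi(K_n,x)=\prod_{i=1}^n(x-\mu_i)$. Setting $x=0$ gives
\[
\prod_{i=1}^n(-\mu_i)=\pi(K_n,0)=\per\bigl(-A(K_n)\bigr)=(-1)^n\per\bigl(A(K_n)\bigr),
\]
since the permanent is multilinear in the rows and scaling all $n$ rows by $-1$ contributes $(-1)^n$. Taking moduli and using Lemma~\ref{lem:Kn-derangement}, we get
\[
\prod_{i=1}^n|\mu_i|=\per(A(K_n))=D_n .
\]
The same fact also follows from Lemma~\ref{lem:principal-expansion}: there $c_n=(-1)^n\per(A(K_n))$, and $c_n=(-1)^n\prod_i\mu_i$.

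Next apply the arithmetic--geometric mean inequality to the nonnegative reals $|\mu_1|,\dots,|\mu_n|$:
\[
\frac{1}{n}\sum_{i=1}^n|\mu_i|\ \ge\ \Bigl(\prod_{i=1}^n|\mu_i|\Bigr)^{1/n}=D_n^{1/n}.
\]
Multiplying by $n$ gives $\Eper(K_n)\ge n\,D_n^{1/n}$.

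The case $n=1$ is consistent with this: $D_1=0$, so the bound reads $\Eper(K_1)\ge 0$, which is trivially true. No step here is really an obstacle. The only point needing care is the sign bookkeeping in $\pi(K_n,0)=(-1)^n D_n$, and this is harmless because only the modulus of the product of the roots is used.
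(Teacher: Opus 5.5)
Your proposal is correct and follows essentially the same route as the paper. The paper likewise evaluates $\pi(K_n,0)=(-1)^n\per(A(K_n))$ to get $\prod_i|\mu_i|=D_n$ via Lemma~\ref{lem:Kn-derangement}, and then applies AM--GM to the moduli $|\mu_i|$.
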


\begin{proof}
Let $\mu_1,\dots,\mu_n$ be the roots of the monic polynomial $\pi(K_n,x)$. Since
$\pi(K_n,0)=\per(-A(K_n))=(-1)^n\per(A(K_n))$, we have
\[
\prod_{i=1}^n \mu_i = (-1)^n\pi(K_n,0)=(-1)^n(-1)^n\per(A(K_n))=\per(A(K_n))=D_n
\]
by Lemma~\ref{lem:Kn-derangement}. Taking absolute values gives $\prod_{i=1}^n |\mu_i|=D_n$.
Applying AM--GM to $|\mu_1|,\dots,|\mu_n|$ yields
\[
\frac{1}{n}\sum_{i=1}^n|\mu_i|\ \ge\ \Bigl(\prod_{i=1}^n|\mu_i|\Bigr)^{1/n}=D_n^{1/n},
\]
which implies $\Eper(K_n)=\sum_{i=1}^n|\mu_i|\ge nD_n^{1/n}$.
\end{proof}

\begin{corollary}\label{cor:Kn-growth-asymp}
As $n\to\infty$,
\[
\Eper(K_n)\ \ge\ \Bigl(\frac1e-o(1)\Bigr)n^2.
\]
In particular, $\Eper(K_n)=\Omega(n^2)$.
\end{corollary}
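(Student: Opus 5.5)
The plan is to start from Lemma~\ref{lem:Kn-derangement-lower}, which gives $\Eper(K_n)\ge n\,D_n^{1/n}$. The whole question then reduces to the asymptotics of $D_n^{1/n}$. We need to show $D_n^{1/n}\ge (1/e-o(1))\,n$, so that multiplying by $n$ yields the claimed $(1/e-o(1))n^2$.

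First, I would use the classical formula $D_n=n!\sum_{k=0}^n(-1)^k/k!$. Its alternating sum converges to $e^{-1}$, so $D_n\ge n!/3$ for all $n\ge 2$. The bound $D_n \ge n!/3$ can be verified from $D_n=\mathrm{round}(n!/e)$, or directly from the alternating-series bound
\[
\sum_{k=0}^n(-1)^k/k!\ \ge\ \tfrac12-\tfrac16=\tfrac13 \qquad (n\ge 2),
\]
using that the partial sums beyond $k=2$ stay above the $k=3$ partial sum. Hence $D_n^{1/n}\ge (n!)^{1/n}\,3^{-1/n}$, and $3^{-1/n}=1-o(1)$.

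Second, I would bound $(n!)^{1/n}$ from below by the elementary inequality $n!\ge (n/e)^n$. This follows from $e^n=\sum_k n^k/k!\ge n^n/n!$. It gives $(n!)^{1/n}\ge n/e$ exactly, with no error term needed. Combining the two steps:
\[
\Eper(K_n)\ \ge\ n\cdot\frac{n}{e}\cdot 3^{-1/n}\ =\ \Bigl(\frac1e-o(1)\Bigr)n^2 ,
\]
since $3^{-1/n}\to 1$. The $\Omega(n^2)$ statement is immediate from this, and the finitely many small $n$ do not matter asymptotically.

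There is no real obstacle here. The only point requiring care is a clean justification that $D_n/n!$ is bounded below by a positive constant. I would cite the inclusion–exclusion formula for $D_n$ and the alternating-series estimate rather than invoking Stirling's formula in full. Alternatively, the recurrence $D_n=nD_{n-1}+(-1)^n$ gives $D_n\ge (n-1)D_{n-1}$ for $n\ge 2$, which is enough to bound $D_n$ below by $n!/3$-type products.
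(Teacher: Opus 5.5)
Your proof is correct and has the same skeleton as the paper's: both use Lemma~\ref{lem:Kn-derangement-lower} to reduce the claim to $D_n^{1/n}\ge(1/e-o(1))\,n$. The difference lies in how that estimate is obtained.

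The paper uses two asymptotic equalities, $D_n=n!/e+O(1)$ and Stirling's $(n!)^{1/n}=(1+o(1))\,n/e$. You replace them with two one-sided elementary inequalities:
\begin{itemize}
\item $D_n\ge n!/3$ for $n\ge 2$, since the alternating partial sums $\sum_{k\le n}(-1)^k/k!$ are minimized over $n\ge2$ at $n=3$, where the value is $1/3$;
\item $n!\ge(n/e)^n$, from $e^n\ge n^n/n!$.
\end{itemize}

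Your route gives a fully explicit, non-asymptotic bound $\Eper(K_n)\ge 3^{-1/n}n^2/e$ for every $n\ge 2$, with no $o(1)$ terms hidden inside $n$-th roots. The paper's route gives the two-sided statement $D_n^{1/n}\sim n/e$. That shows $1/e$ is exactly the constant the AM--GM lemma can deliver, so sharper estimates of $D_n$ cannot improve the corollary without a new idea.

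One small imprecision is in your final aside. The recurrence gives $D_n\ge(n-1)D_{n-1}$ for $n\ge2$, and iterating from $D_2=1$ yields $D_n\ge(n-1)!$, not an $n!/3$-type bound. This still suffices, since $n^{-1/n}\to1$.
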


\begin{proof}
By Lemma~\ref{lem:Kn-derangement-lower}, $\Eper(K_n)\ge n\,D_n^{1/n}$.
It is standard that $D_n=\frac{n!}{e}+O(1)$ (see~\cite{GrahamKnuthPatashnik1994}), hence
$D_n^{1/n}=\bigl(\frac{n!}{e}\bigr)^{1/n}\,(1+o(1))$.
By Stirling's formula, $(n!)^{1/n}=\bigl(1+o(1)\bigr)\frac{n}{e}$, so
$D_n^{1/n}=\bigl(1+o(1)\bigr)\frac{n}{e}$.
Therefore $\Eper(K_n)\ge \bigl(\frac1e-o(1)\bigr)n^2$.
\end{proof}

As a simple consequence, $K_n$ already dominates every $n$-vertex graph whose spectral radius is at most $D_n^{1/n}$.
\begin{theorem}\label{thm:Kn-dominates-small-rho}
Let $G$ be a simple graph on $n$ vertices. If $\rho(G)\le D_n^{1/n}$, then
\[
\Eper(G)\le \Eper(K_n).
\]
\end{theorem}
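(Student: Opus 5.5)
The plan is to chain Theorem~\ref{thm:upper-rho} with Lemma~\ref{lem:Kn-derangement-lower}, with no new estimates required.

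Let $G$ be a simple graph on $n$ vertices with $\rho(G)\le D_n^{1/n}$. Lemma~\ref{lem:BB-disk} gives $|\mu_i|\le\rho(G)$ for every permanental root $\mu_i$ of $G$. Summing over $i$, as in Theorem~\ref{thm:upper-rho}, yields
\[
\Eper(G)\le n\rho(G)\le n\,D_n^{1/n}.
\]
On the other side, Lemma~\ref{lem:Kn-derangement-lower} provides the lower bound $n\,D_n^{1/n}\le \Eper(K_n)$, obtained from AM--GM applied to the moduli of the roots of $\pi(K_n,x)$, whose product is $D_n$. Concatenating the two inequalities gives $\Eper(G)\le \Eper(K_n)$.

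I do not expect any step to be an obstacle, since the statement is a direct corollary. The only points worth checking are the small cases. For $n=1$ we have $D_1=0$, so the hypothesis forces $\rho(G)=0$. Then $G=K_1$ and both sides equal $0$. For $n\ge 2$, $D_n\ge 1$, so the hypothesis is meaningful.

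It may also be worth remarking that the hypothesis class is nonempty and nontrivial. By Corollary~\ref{cor:Kn-growth-asymp}, $D_n^{1/n}\sim n/e$, so the theorem covers, for instance, all graphs with maximum degree at most roughly $n/e$, because $\rho(G)\le\Delta(G)$.
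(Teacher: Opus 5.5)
Your proof is correct and is the same as the paper's: you chain $\Eper(G)\le n\rho(G)\le nD_n^{1/n}$ from Theorem~\ref{thm:upper-rho} with $\Eper(K_n)\ge nD_n^{1/n}$ from Lemma~\ref{lem:Kn-derangement-lower}. Your side remarks on the case $n=1$ and on the hypothesis being nontrivial are harmless additions.
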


\begin{proof}
By Theorem~\ref{thm:upper-rho}, $\Eper(G)\le n\rho(G)\le nD_n^{1/n}$.
By Lemma~\ref{lem:Kn-derangement-lower}, $\Eper(K_n)\ge nD_n^{1/n}$.
Thus $\Eper(G)\le \Eper(K_n)$.
\end{proof}

\begin{remark}
Exhaustive enumeration for all graphs up to $n\le 10$ indicates that
$K_n$ is the unique maximizer of $\Eper(G)$ among $n$-vertex graphs. Moreover, for
$3\le n\le 8$, the best non-complete graph is $K_n$ with one edge deleted and in the dense
regime $m=\binom{n}{2}-k$ the maximizer appears to be the clique with a matching of size $k$ deleted.
\end{remark}
\section{Future Directions}\label{sec:open:problems}

Several interesting questions remain open. On the algorithmic side, it would be interesting to determine
the complexity of computing $\Eper(G)$ exactly, and more realistically to understand the
complexity of approximating $\Eper(G)$ within additive error $\varepsilon n$ or relative
error $1\pm\varepsilon$. On the extremal side, our general upper bound $\Eper(G)\le n\rho(G)$
is often not tight; finding universal upper bounds that improve on this in nontrivial regimes seems particularly worthwhile. Structurally, beyond the bipartite classes
in Section~\ref{sec:families}, it remains to characterize broad graph families for which
$\Eper(G)=\E(G)$, or to identify clear obstructions to equality. Finally, developing general
constraints on the location and multiplicity pattern of permanental roots (forced zeros, symmetry,
stability phenomena, etc.) could provide new ways to control $\Eper(G)$ without explicitly
computing permanental roots.

\bibliographystyle{plain}
\bibliography{references}

\appendix

\section{Proof of Theorem~\ref{thm:cycle-energy}}\label{app:cycles}

This appendix proves Theorem~\ref{thm:cycle-energy}. We use the following explicit expression for
$\pi(C_n,x)$ due to Li and Zhang.

\begin{theorem}[{\cite{LiZhang2012Certain}}]\label{thm:cycle-poly-LZ}
For $n\ge 3$,
\[
\pi(C_n,x)=
\begin{cases}
\displaystyle \prod_{t=1}^{n}\Bigl(x+2\ii\sin\frac{(2t-1)\pi}{n}\Bigr), & \text{if $n$ is even},\\[1.2ex]
\displaystyle \prod_{t=1}^{n}\Bigl(x+2\ii\cos\frac{t\pi}{n+1}\Bigr)
\;+\;
\prod_{t=1}^{n-2}\Bigl(x+2\ii\cos\frac{t\pi}{n-1}\Bigr)
\;-\;2, & \text{if $n$ is odd}.
\end{cases}
\]
\end{theorem}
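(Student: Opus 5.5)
The approach is to expand $\per(xI-A(C_n))$ combinatorially through Lemma~\ref{lem:principal-expansion}, and then to identify the resulting pieces with polynomials whose roots are known.

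\textbf{Step 1: the expansion.} For a principal submatrix $A[S,S]$, a permutation contributes to $\per(A[S,S])$ exactly when its cycles are edges ($2$-cycles) or cycles of $G[S]$ traversed in some direction. Each cycle of length $\ge 3$ can be traversed in $2$ directions, and there is no sign. Hence
\[
\pi(G,x)=\sum_{H}(-1)^{|V(H)|}2^{c(H)}x^{n-|V(H)|},
\]
where $H$ runs over the spanning-type subgraphs whose components are $K_2$'s or cycles, and $c(H)$ is the number of cycle components. For $G=C_n$ the only such $H$ are the matchings and the whole cycle. This gives
\[
\pi(C_n,x)=M(C_n,x)+(-1)^n\,2,\qquad M(G,x):=\sum_{k}m(G,k)\,x^{n-2k}.
\]
Here $m(G,k)$ is the number of $k$-matchings, and each matching carries the sign $(-1)^{2k}=1$.

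\textbf{Step 2: reduce to paths.} Fix an edge $uv$ of $C_n$. Split the matchings according to whether they avoid $uv$ or use it. Those avoiding $uv$ are matchings of $P_n$. Those using $uv$ are matchings of the path $P_{n-2}$ on the remaining vertices, with the same power of $x$. So
\[
M(C_n,x)=M(P_n,x)+M(P_{n-2},x).
\]
A path is a forest, so Theorem~\ref{thm:c4kfree-perspectrum} gives $M(P_k,x)=\pi(P_k,x)=\ii^{-k}\chi(P_k,\ii x)$. Using $\chi(P_k,y)=\prod_{t=1}^k\bigl(y-2\cos\tfrac{t\pi}{k+1}\bigr)$, this equals $\prod_{t=1}^k\bigl(x+2\ii\cos\tfrac{t\pi}{k+1}\bigr)$. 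For odd $n$, where $(-1)^n2=-2$, this is exactly the claimed formula.

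\textbf{Step 3: the even case.} Here I would use the Chebyshev identity $\mu(C_n,y)=2T_n(y/2)$ for the (signed) matching polynomial. It follows from the same path recursion together with $\mu(P_k,y)=U_k(y/2)$. Combined with $M(C_n,x)=\ii^{-n}\mu(C_n,\ii x)$, it gives
\[
\pi(C_n,x)=2(-1)^{n/2}T_n(\ii x/2)+2.
\]
The roots therefore satisfy $T_n(\ii x/2)=-(-1)^{n/2}$. Write $\ii x/2=\cos\theta$. This becomes $\cos n\theta=\cos\bigl(\pi+\tfrac{n\pi}{2}\bigr)$, so $\theta=\tfrac{(2t-1)\pi}{n}+\tfrac{\pi}{2}$ up to sign and period. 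Hence $x=-2\ii\cos\theta=2\ii\sin\tfrac{(2t-1)\pi}{n}$. Replacing $t$ by a suitable $t'$ (using the symmetry $\sin\bigl(\tfrac{(2t-1)\pi}{n}+\pi\bigr)=-\sin\tfrac{(2t-1)\pi}{n}$ over $t=1,\dots,n$) puts the roots in the form $x=-2\ii\sin\tfrac{(2t-1)\pi}{n}$.

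\textbf{Main obstacle.} The delicate point is the multiplicities. The equation $\cos n\theta=\pm1$ yields only about $n/2$ distinct roots, and these are double roots of $T_n\mp1$. I would handle this via the factorization $T_n(y)\pm1=2T_{n/2}(y)^2$ or $2(y^2-1)U_{n/2-1}(y)^2$, checking which applies according to $n\bmod 4$. Then I would verify that the multiset $\{\sin\tfrac{(2t-1)\pi}{n}\}_{t=1}^n$ lists each root exactly twice, with the correct multiplicity. Comparing leading coefficients (both sides are monic of degree $n$) finishes the proof.

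As a cross-check in the case $n\equiv 2\pmod 4$: $C_n$ is then bipartite with no $4k$-cycle, so Theorem~\ref{thm:c4kfree-perspectrum} applies directly. The multiset $\{2\cos\tfrac{2\pi s}{n}\}$ coincides with $\{2\sin\tfrac{(2t-1)\pi}{n}\}$ via $s=\tfrac{n+2}{4}-t$.
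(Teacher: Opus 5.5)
The paper gives no proof of this statement. It is quoted from Li and Zhang~\cite{LiZhang2012Certain}, so there is no in-paper argument to compare against. Your proposal is a self-contained proof that uses only tools the paper already has (Lemma~\ref{lem:principal-expansion} and Theorem~\ref{thm:c4kfree-perspectrum}), and it is correct.

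Steps~1--2 hold for every $n\ge 3$ and give $\pi(C_n,x)=\pi(P_n,x)+\pi(P_{n-2},x)+2(-1)^n$. For odd $n$ this is the claimed formula verbatim. Step~1 is also the combinatorial content behind Lemma~\ref{lem:cycle-lambda-form}, since the matching part $M(C_n,x)$ equals $\lambda_+^n+\lambda_-^n$.

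In Step~3, the reduction $\pi(C_n,x)=2(-1)^{n/2}T_n(\ii x/2)+2$ is right. So are the factorizations $T_n+1=2T_{n/2}^2$ and $T_n-1=2(y^2-1)U_{n/2-1}^2$, and they do settle the multiplicities. One sentence needs correcting: for $n\equiv 2\pmod 4$, not every root is double. The roots $\pm 2\ii$ (from $y=\mp 1$) are simple. This matches the fact that $\sin\frac{(2t-1)\pi}{n}=1$ and $\sin\frac{(2t-1)\pi}{n}=-1$ each occur for exactly one $t$, namely $t=\frac{n+2}{4}$ and $t=\frac{3n+2}{4}$. Your factorization already accounts for this, so only the wording is off.

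If you prefer to avoid root-matching altogether, note that $\frac{(2t-1)\pi}{n}-\frac{\pi}{2}=\phi+\frac{2\pi(t-1)}{n}$ with $\phi=\frac{\pi}{n}-\frac{\pi}{2}$. Then use the classical identity
\[
\prod_{s=0}^{n-1}\Bigl(z-2\cos\bigl(\phi+\tfrac{2\pi s}{n}\bigr)\Bigr)=2T_n(z/2)-2\cos(n\phi).
\]
Apply it with $z=\ii x$ and $\cos(n\phi)=-(-1)^{n/2}$, then multiply by $(-\ii)^n=(-1)^{n/2}$. This gives
$\prod_{t=1}^{n}\bigl(x+2\ii\sin\frac{(2t-1)\pi}{n}\bigr)=2(-1)^{n/2}T_n(\ii x/2)+2$
directly as a polynomial identity, with the multiplicities built in.
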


We also use the following standard trigonometric identity.
\begin{lemma}[{\cite{GradshteynRyzhik}}]\label{lem:odd-sine-sum}
For $m\ge 1$ and $\theta\not\equiv 0 \pmod{\pi}$,
\[
\sum_{j=1}^{m}\sin\bigl((2j-1)\theta\bigr)=\frac{\sin^2(m\theta)}{\sin\theta}.
\]
\end{lemma}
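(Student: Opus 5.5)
The plan is a telescoping argument. Multiply the left-hand side by $2\sin\theta$, which is nonzero because $\theta\not\equiv 0\pmod{\pi}$. Then rewrite each summand with the product-to-sum identity $2\sin A\sin B=\cos(A-B)-\cos(A+B)$. Taking $A=(2j-1)\theta$ and $B=\theta$ gives
\[
2\sin\theta\,\sin\bigl((2j-1)\theta\bigr)=\cos\bigl((2j-2)\theta\bigr)-\cos(2j\theta).
\]

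Summing over $j=1,\dots,m$, the right-hand sides telescope. Consecutive terms cancel, leaving only the first cosine $\cos 0=1$ and the last $\cos(2m\theta)$:
\[
2\sin\theta\sum_{j=1}^{m}\sin\bigl((2j-1)\theta\bigr)=1-\cos(2m\theta).
\]
Next, apply the half-angle identity $1-\cos(2m\theta)=2\sin^2(m\theta)$ and divide both sides by $2\sin\theta$. This yields exactly $\sin^2(m\theta)/\sin\theta$.

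As an independent check, one could instead take the imaginary part of the geometric sum
\[
\sum_{j=1}^m e^{\ii(2j-1)\theta}=e^{\ii\theta}\,\frac{e^{2\ii m\theta}-1}{e^{2\ii\theta}-1}=e^{\ii m\theta}\,\frac{\sin(m\theta)}{\sin\theta}.
\]
Its imaginary part is $\sin^2(m\theta)/\sin\theta$. Here the hypothesis $\theta\not\equiv0\pmod\pi$ is what guarantees $e^{2\ii\theta}\neq1$.

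There is no real obstacle. The only point needing care is the bookkeeping of the endpoint terms in the telescoping sum, together with noting where the hypothesis on $\theta$ is used, namely in dividing by $\sin\theta$.
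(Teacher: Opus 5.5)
Your proof is correct. The product-to-sum step gives $\cos((2j-2)\theta)-\cos(2j\theta)$, and the sum telescopes to $1-\cos(2m\theta)=2\sin^2(m\theta)$. Dividing by $2\sin\theta\neq 0$ finishes it, and your geometric-series check is also right.

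The paper gives no proof of this identity. It cites it from Gradshteyn--Ryzhik as a standard table entry and uses it once, with $m=n/2$ and $\theta=\pi/n$, in the even-cycle computation. So the only difference is that your version is self-contained. It costs a few lines and shows exactly where the hypothesis $\sin\theta\neq 0$ is used, whereas the citation just saves space.

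A small bonus of your telescoping argument is that it is purely algebraic. It therefore holds verbatim for any (even complex) $\theta$ with $\sin\theta\neq 0$. Your imaginary-part check, by contrast, implicitly uses that $\theta$ is real.
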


\subsection{Even cycles}
\begin{corollary}\label{cor:even-cycle-energy-app}
If $n\ge 4$ is even, then
\[
\Eper(C_n)=\frac{4}{\sin(\pi/n)}.
\]
\end{corollary}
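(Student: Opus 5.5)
For even $n$, Theorem~\ref{thm:cycle-poly-LZ} factors $\pi(C_n,x)$ into linear factors. Its roots are therefore
\[
\mu_t=-2\ii\sin\frac{(2t-1)\pi}{n},\qquad t=1,\dots,n,
\]
so
\[
\Eper(C_n)=2\sum_{t=1}^n\Bigl|\sin\frac{(2t-1)\pi}{n}\Bigr|.
\]
The plan is to remove the absolute values using symmetry and then apply Lemma~\ref{lem:odd-sine-sum}. Write $n=2m$ with $m\ge 2$ and set $\theta=\pi/n$, so the angles are $(2t-1)\theta$ for $t=1,\dots,2m$.

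For $1\le t\le m$, the angle $(2t-1)\theta$ lies in $(0,\pi)$, so the sine is positive. For $m+1\le t\le 2m$, write $t=m+s$ with $1\le s\le m$. Since $2m\theta=\pi$, we get $(2t-1)\theta=\pi+(2s-1)\theta$, and hence
\[
\sin\bigl((2t-1)\theta\bigr)=-\sin\bigl((2s-1)\theta\bigr)<0.
\]
The absolute values of the second half therefore reproduce the first half exactly, and
\[
\Eper(C_n)=4\sum_{j=1}^{m}\sin\bigl((2j-1)\theta\bigr).
\]

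Now apply Lemma~\ref{lem:odd-sine-sum} with $\theta=\pi/n$; this is allowed because $\theta\not\equiv 0\pmod\pi$. It gives
\[
\sum_{j=1}^{m}\sin\bigl((2j-1)\theta\bigr)=\frac{\sin^2(m\pi/n)}{\sin(\pi/n)}=\frac{\sin^2(\pi/2)}{\sin(\pi/n)}=\frac{1}{\sin(\pi/n)}.
\]
Hence $\Eper(C_n)=4/\sin(\pi/n)$.

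The only delicate step is the sign bookkeeping. We must check that no angle is a multiple of $\pi$, which holds because the odd multiples $(2t-1)\pi/n$ are never integer multiples of $\pi$ when $n$ is even. We must also check that the reindexing $t=m+s$ covers all of $t=m+1,\dots,2m$, which it does as $s$ runs over $1,\dots,m$. As a sanity check, $n=4$ gives roots $\pm\sqrt2\,\ii$ (each double), so $\Eper(C_4)=4\sqrt2=4/\sin(\pi/4)$.
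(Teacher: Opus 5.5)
Your proof is correct and follows the paper's argument: read the roots off the Li--Zhang factorization, fold the second half of the sine terms onto the first half, and apply Lemma~\ref{lem:odd-sine-sum} with $m=n/2$ and $\theta=\pi/n$. Your explicit reindexing $t=m+s$, which gives the angles $\pi+(2s-1)\theta$, justifies the folding step more carefully than the paper's brief appeal to $\sin(\pi-\alpha)=\sin\alpha$.
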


\begin{proof}
By Theorem~\ref{thm:cycle-poly-LZ} (even case), the roots are
$\mu_t=-2\ii\sin\frac{(2t-1)\pi}{n}$ for $t=1,\dots,n$. Hence
\[
\Eper(C_n)=\sum_{t=1}^n|\mu_t|
=2\sum_{t=1}^n\Bigl|\sin\frac{(2t-1)\pi}{n}\Bigr|
=4\sum_{t=1}^{n/2}\sin\frac{(2t-1)\pi}{n},
\]
using $\sin(\pi-\alpha)=\sin\alpha$. Apply Lemma~\ref{lem:odd-sine-sum} with $m=n/2$ and $\theta=\pi/n$:
\[
\sum_{t=1}^{n/2}\sin\frac{(2t-1)\pi}{n}
=\frac{\sin^2(\pi/2)}{\sin(\pi/n)}=\frac{1}{\sin(\pi/n)}.
\]
Thus $\Eper(C_n)=4/\sin(\pi/n)$.
\end{proof}

\subsection{Odd cycles}
For odd $n$, we use a closed form for $\pi(C_n,x)$ in terms of
\[
\lambda_\pm=\frac{x\pm\sqrt{x^2+4}}{2},
\]
and then extract an explicit root parametrization to deduce the asymptotic behavior of
$\Eper(C_n)$.

\begin{lemma}[{\cite{kasum1981chemical}}]\label{lem:cycle-lambda-form}
For $n\ge 3$,
\[
\pi(C_n,x)=\lambda_+^{\,n}+\lambda_-^{\,n}+2(-1)^n.
\]
\end{lemma}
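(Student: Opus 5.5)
We prove the identity $\pi(C_n,x)=\lambda_+^{\,n}+\lambda_-^{\,n}+2(-1)^n$ by expanding $\per(xI-A(C_n))$ over permutations and grouping them by their cycle structure on the cycle graph.

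\textbf{Step 1: the admissible permutations.} A permutation contributes to $\per(xI-A(C_n))$ only if each $i$ is sent to $i$ (weight $x$) or to a neighbour $i\pm1$ (weight $-1$, since the off-diagonal entries of $xI-A$ are $-1$). Such a permutation is of exactly one of two kinds:
\begin{enumerate}
\item[(a)] a disjoint union of fixed points and transpositions $(i\;i{+}1)$ along edges of $C_n$;
\item[(b)] one of the two cyclic rotations $i\mapsto i+1$ or $i\mapsto i-1$, which exist since $n\ge3$.
\end{enumerate}

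\textbf{Step 2: the rotations.} Each rotation uses $n$ off-diagonal entries, so it contributes $(-1)^n$ to the permanent. There is no sign factor, because we are computing a permanent. The two rotations together give $2(-1)^n$.

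\textbf{Step 3: the matchings.} A permutation of type (a) is determined by a matching $M$ of $C_n$ with $k=|M|$ edges. It contributes
\[
x^{\,n-2k}\,\bigl((-1)(-1)\bigr)^{k}=x^{\,n-2k}.
\]
Hence the type-(a) part equals $\sum_k m(C_n,k)\,x^{n-2k}$, where $m(C_n,k)$ is the number of $k$-matchings of $C_n$. Note that the sign is $+$ here, in contrast with the matching polynomial, where it would be $(-1)^k$.

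\textbf{Step 4: closed form via a transfer matrix.} We show that
\[
\sum_k m(C_n,k)\,x^{n-2k}=\tr(T^n), \qquad T=\begin{pmatrix}x&1\\1&0\end{pmatrix}.
\]
Encode each vertex $i$ by a state recording whether the edge $\{i,i{+}1\}$ is in $M$. The allowed transitions between consecutive states give the entries of $T$: weight $x$ when vertex $i{+}1$ is unmatched and the next edge may start, weight $1$ for entering or leaving a matched edge, and $0$ for forbidden transitions. Then $\tr(T^n)$ counts closed walks of length $n$, which correspond exactly to matchings of the cycle with the right weights.

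The eigenvalues of $T$ solve $\lambda^2-x\lambda-1=0$, so they are $\lambda_\pm=(x\pm\sqrt{x^2+4})/2$. Therefore $\tr(T^n)=\lambda_+^n+\lambda_-^n$.

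Alternatively, we can check that $f_n:=\sum_k m(C_n,k)x^{n-2k}$ satisfies the recurrence $f_n=xf_{n-1}+f_{n-2}$. This follows from the path recurrence together with the standard relation between cycle and path matchings. Since $\lambda_\pm^n$ satisfy the same recurrence, it suffices to match initial values, e.g.\ at $n=3,4$ (or to extend formally to $n=1,2$ with $f_1=x$, $f_2=x^2+2$).

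\textbf{Main obstacle.} The only delicate point is setting up the transfer matrix so that it counts matchings on the cycle without overcounting the wrap-around edge. If this becomes awkward, we fall back on the recurrence argument, which reduces everything to checking a few small cases.

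Adding the type-(a) and type-(b) contributions gives the claimed formula $\pi(C_n,x)=\lambda_+^{\,n}+\lambda_-^{\,n}+2(-1)^n$.
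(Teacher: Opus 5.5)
Your proof is correct. The paper gives no argument for this lemma; it imports the identity from \cite{kasum1981chemical}. So your derivation is a self-contained replacement for a citation, not a variant of an in-paper proof.

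Your decomposition is the permanental analogue of the Sachs expansion, specialized to $C_n$. Equivalently, it is Lemma~\ref{lem:principal-expansion} applied to $A(C_n)$:
\begin{itemize}
\item for $S\subsetneq[n]$, the induced subgraph is a disjoint union of paths, and $\per(A(C_n)[S])$ counts its perfect matchings;
\item for $S=[n]$, the permanent additionally picks up the two rotations.
\end{itemize}
Your Step~1 dichotomy deserves one explicit sentence: a permutation cycle of length at least $3$ traces a cycle of the graph, and the only cycle of $C_n$ is $C_n$ itself.

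Both of your closed-form arguments work.
\begin{itemize}
\item \textbf{Transfer matrix.} With states on edges and weights on vertices, $\tr(T^n)$ is a sum over cyclic $0/1$ sequences of length $n$ with no two cyclically consecutive $1$'s. For $n\ge 3$ these are exactly the matchings of $C_n$, each weighted by $x$ raised to the number of unmatched vertices. The trace handles the wrap-around edge automatically, so the ``main obstacle'' you flag is not a real one.
\item \textbf{Recurrence.} Conditioning on the edge $\{n,1\}$ gives $m(C_n,k)=m(P_n,k)+m(P_{n-2},k-1)$, hence $f_n=g_n+g_{n-2}$ in terms of the path generating functions $g_n$, and $f_n$ inherits $f_n=xf_{n-1}+f_{n-2}$. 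The initial values $f_3=x^3+3x$ and $f_4=x^4+4x^2+2$ agree with $\lambda_+^n+\lambda_-^n$.
\end{itemize}

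What your route adds over the bare citation is transparency. Compare with $\chi(C_n,x)=\alpha^n+\beta^n-2$, where $\alpha,\beta=(x\pm\sqrt{x^2-4})/2$:
\begin{itemize}
\item dropping the sign $(-1)^k$ on $k$-matchings turns $x^2-4$ into $x^2+4$;
\item dropping the permutation sign turns the $-2$ contributed by the two $n$-cycles into $2(-1)^n$.
\end{itemize}
This is the form exploited in Lemma~\ref{lem:odd-cycle-roots-app}. As a check, $n=3$ gives $x^3+3x-2$, matching the paper's $\pi(K_3,x)$.
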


\begin{lemma}\label{lem:odd-cycle-roots-app}
Let $n\ge 3$ be odd and set $a:=\log(1+\sqrt2)$. Then the roots of $\pi(C_n,x)$ are
\[
\mu_k=2\sinh\!\Bigl(\frac{a+2\pi\ii k}{n}\Bigr),\qquad k=0,1,\dots,n-1.
\]
\end{lemma}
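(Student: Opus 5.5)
The plan is to parametrize $x$ hyperbolically so that the closed form of Lemma~\ref{lem:cycle-lambda-form} collapses to a single hyperbolic sine, solve that equation explicitly, and then count the roots to conclude that they are all of them. Since $n$ is odd, Lemma~\ref{lem:cycle-lambda-form} gives
\[
\pi(C_n,x)=\lambda_+^{\,n}+\lambda_-^{\,n}-2 .
\]
The expression $\lambda_+^n+\lambda_-^n$ is symmetric in $\lambda_\pm$, so it is a polynomial in $x$. In particular, the choice of branch of $\sqrt{x^2+4}$ is irrelevant.

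For any $u\in\mathbb{C}$ I will set $x=2\sinh u$. Then $x^2+4=4\cosh^2u$, and with the branch $\sqrt{x^2+4}=2\cosh u$ we get $\lambda_+=e^{u}$ and $\lambda_-=\sinh u-\cosh u=-e^{-u}$. Because $n$ is odd, $\lambda_-^n=-e^{-nu}$, and therefore
\[
\pi(C_n,2\sinh u)=e^{nu}-e^{-nu}-2=2\bigl(\sinh(nu)-1\bigr).
\]
So $2\sinh u$ is a root of $\pi(C_n,x)$ whenever $\sinh(nu)=1$. Since $\sinh a=\tfrac12\bigl((1+\sqrt2)-(1+\sqrt2)^{-1}\bigr)=1$ for $a=\log(1+\sqrt2)$, it suffices to take $nu=a+2\pi\ii k$. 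This gives that each $\mu_k=2\sinh\bigl((a+2\pi\ii k)/n\bigr)$ is a root. The other family of solutions of $\sinh w=1$, namely $w=\ii\pi-a+2\pi\ii k$, does not need to be handled separately, because of the counting argument below. As a sanity check, using $\sinh(\ii\pi-z)=\sinh z$ and $n$ odd, that family reproduces the same values.

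The step requiring care is showing that $\mu_0,\dots,\mu_{n-1}$ are pairwise distinct. Then $n$ distinct roots of a monic polynomial of degree $n$ are all its roots, each simple. I will use the fact that $\sinh z_1=\sinh z_2$ iff $z_2=z_1+2\pi\ii\ell$ or $z_2=\ii\pi-z_1+2\pi\ii\ell$ for some integer $\ell$. Take $z_j=(a+2\pi\ii k_j)/n$ with $0\le k_1<k_2\le n-1$.
\begin{itemize}
\item The second alternative would force $\operatorname{Re}z_2=-\operatorname{Re}z_1$, i.e.\ $a/n=-a/n$, which is impossible since $a>0$.
\item The first alternative would force $\operatorname{Im}(z_2-z_1)=2\pi(k_2-k_1)/n\in 2\pi\mathbb{Z}$. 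This is impossible since $0<k_2-k_1<n$.
\end{itemize}
Hence the $\mu_k$ are distinct, and they are exactly the roots of $\pi(C_n,x)$.
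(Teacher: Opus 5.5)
Your proof is correct. Its core computation is the same as the paper's: with $\lambda_+=e^{u}$ and $\lambda_-=-e^{-u}$ (the paper writes $\lambda_-=-1/\lambda$), the equation $\pi(C_n,x)=0$ becomes $e^{nu}-e^{-nu}=2$, which is solved by $e^{nu}=1+\sqrt2=e^{a}$.

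What differs is the direction of the argument. The paper starts from a root $x$, puts $y=\lambda_+^n$, finds $y=1\pm\sqrt2$, simply ``takes'' $y=1+\sqrt2$, and extracts $n$th roots. This is a necessity argument with two points left implicit:
\begin{itemize}
\item It relies on $y=1-\sqrt2$ contributing nothing new. For odd $n$, switching the branch of $\sqrt{x^2+4}$ (replacing $\lambda$ by $-1/\lambda$) turns $\lambda^n=1-\sqrt2$ into $\lambda^n=1+\sqrt2$ and leaves $x=\lambda-1/\lambda$ unchanged.
\item It never checks that $\mu_0,\dots,\mu_{n-1}$ are pairwise distinct, which is needed to identify this list with the multiset of roots.
\end{itemize}
You instead verify that each $\mu_k$ is a root, then close with the criterion for $\sinh z_1=\sinh z_2$ and a degree count. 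Both points are thus handled explicitly, and you also get that every root is simple.

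The paper's version is shorter and shows directly why a root must have this shape; yours is the complete one. Your remark about the second family $w=\ii\pi-a+2\pi\ii k$ is exactly the case $e^{nu}=1-\sqrt2$ that the paper's choice of $y$ leaves unaddressed.
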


\begin{proof}
Let $n$ be odd and write $\lambda:=\lambda_+$. Then $\lambda_-=-1/\lambda$ and $(-1)^n=-1$.
By Lemma~\ref{lem:cycle-lambda-form},
\[
0=\pi(C_n,x)=\lambda^{n}-\lambda^{-n}-2.
\]
Let $y:=\lambda^n$. Then $y-y^{-1}=2$, i.e., $y^2-2y-1=0$, so $y=1\pm\sqrt2$.
Take $y=1+\sqrt2=e^{a}$ and write
\[
\lambda=\exp\!\Bigl(\frac{a+2\pi\ii k}{n}\Bigr),\qquad k=0,1,\dots,n-1.
\]
Finally,
\[
x=\lambda-\frac{1}{\lambda}
=\exp\!\Bigl(\frac{a+2\pi\ii k}{n}\Bigr)-\exp\!\Bigl(-\frac{a+2\pi\ii k}{n}\Bigr)
=2\sinh\!\Bigl(\frac{a+2\pi\ii k}{n}\Bigr),
\]
which gives the claimed roots.
\end{proof}

\begin{corollary}\label{cor:odd-cycle-asymp-app}
If $n$ is odd, then
\[
\Eper(C_n)=\frac{4}{\pi}n+o(n)\qquad (n\to\infty,\ n\ \text{odd}).
\]
\end{corollary}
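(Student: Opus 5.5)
We use the explicit roots from Lemma~\ref{lem:odd-cycle-roots-app}, $\mu_k=2\sinh\bigl((a+2\pi\ii k)/n\bigr)$ for $k=0,\dots,n-1$, with $a=\log(1+\sqrt2)$, and treat $\Eper(C_n)=\sum_k|\mu_k|$ as a Riemann sum. Put $\theta_k=2\pi k/n$ and $\varepsilon=a/n$. Then
\[
\sinh(\varepsilon+\ii\theta_k)=\sinh\varepsilon\cos\theta_k+\ii\cosh\varepsilon\sin\theta_k,
\]
so
\[
|\mu_k|^2=4\bigl(\sinh^2\varepsilon\cos^2\theta_k+\cosh^2\varepsilon\sin^2\theta_k\bigr)=4\bigl(\sinh^2\varepsilon+\sin^2\theta_k\bigr),
\]
using $\cosh^2-\sinh^2=1$. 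This closed form for the moduli is the key step, and it is exact.

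Next we compare $|\mu_k|$ with $2|\sin\theta_k|$. Since $\sqrt{s^2+b^2}-|b|\le s$ for $s\ge 0$, we get
\[
0\le |\mu_k|-2|\sin\theta_k|\le 2\sinh\varepsilon=2\sinh(a/n).
\]
Summing over the $n$ indices, the total error is at most $2n\sinh(a/n)\to 2a$, which is $O(1)$. Hence
\[
\Eper(C_n)=2\sum_{k=0}^{n-1}\Bigl|\sin\frac{2\pi k}{n}\Bigr|+O(1).
\]

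It remains to evaluate $\sum_{k=0}^{n-1}|\sin(2\pi k/n)|$. As a Riemann sum this equals
\[
n\cdot\frac{1}{2\pi}\int_0^{2\pi}|\sin t|\,dt+o(n)=\frac{2n}{\pi}+o(n),
\]
because $|\sin|$ is Lipschitz. The discretization error is $O(1)$, since the function has bounded variation $4$ over the period and the mesh is $2\pi/n$. Multiplying by $2$ gives $\Eper(C_n)=\frac{4}{\pi}n+o(n)$.

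If an exact expression is wanted, for odd $n$ the points $2\pi k/n$ reduce mod $\pi$ to the points $\pi j/n$, $j=0,\dots,n-1$, so
\[
\sum_k|\sin\theta_k|=\sum_{j=0}^{n-1}\sin(\pi j/n)=\cot\bigl(\pi/(2n)\bigr)=\tfrac{2n}{\pi}+O(1/n).
\]
This sharpens the error term to $O(1)$. The main obstacle is only the modulus computation in the first paragraph; the rest is routine bookkeeping of the error terms.
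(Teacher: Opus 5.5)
Your proof is correct and follows essentially the same route as the paper. Both use the explicit roots $\mu_k=2\sinh((a+2\pi\ii k)/n)$, the identity $|\sinh(u+\ii\theta)|^2=\sinh^2u+\sin^2\theta$, a uniform comparison with $2|\sin\theta_k|$, and a Riemann sum for $|\sin|$. Where you go further is in the error terms: the explicit bound $0\le|\mu_k|-2|\sin\theta_k|\le 2\sinh(a/n)$ and the exact evaluation $\sum_k|\sin\theta_k|=\cot(\pi/(2n))$ (via the bijection $k\mapsto 2k \bmod n$ for odd $n$) sharpen the paper's $o(n)$ to $O(1)$.
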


\begin{proof}
Let $u:=a/n$ and $\theta_k:=2\pi k/n$. By Lemma~\ref{lem:odd-cycle-roots-app},
\[
\mu_k=2\sinh(u+\ii\theta_k).
\]
Using $|\sinh(u+\ii\theta)|^2=\sinh^2u+\sin^2\theta$, we obtain
\[
\frac{1}{n}\Eper(C_n)
=\frac{2}{n}\sum_{k=0}^{n-1}\sqrt{\sinh^2(a/n)+\sin^2\theta_k}.
\]
Since $\sinh(a/n)=O(1/n)$, uniformly in $k$ we have
\[
\sqrt{\sinh^2(a/n)+\sin^2\theta_k}=|\sin\theta_k|+o(1).
\]
Therefore
\[
\frac{1}{n}\Eper(C_n)=\frac{2}{n}\sum_{k=0}^{n-1}|\sin\theta_k|+o(1),
\]
and the sum is a Riemann sum for $\frac{1}{2\pi}\int_0^{2\pi}|\sin t|\,dt=\frac{2}{\pi}$, giving the limit $4/\pi$.
\end{proof}

\end{document}